\def\@cite#1#2{{\m@th\upshape\bfseries%
[{#1\if@tempswa{\m@th\upshape\mdseries, #2}\fi}]}} \makeatother
\theoremstyle{plain}
\newtheorem{thm}[subsection]{Theorem}
\newtheorem{prop}[subsection]{Proposition}
\newtheorem{lem}[subsection]{Lemma}
\theoremstyle{definition}
\newtheorem{rem}[subsection]{Remark}
\newtheorem{example}[subsection]{Example}
\newtheorem{defn}[subsection]{Definition}
\newcommand{\bN}{{\mathbb{N}}}
\newcommand{\bR}{{\mathbb{R}}}
\newcommand{\bT}{{\mathbb{T}}}
\newcommand{\bZ}{{\mathbb{Z}}}
  \newcommand{\F}{{\mathcal{F}}}
  \newcommand{\G}{{\mathcal{G}}}
  \newcommand{\K}{{\mathcal{K}}}
  \newcommand{\U}{{\mathcal{U}}}
  \newcommand{\V}{{\mathcal{V}}}
\newcommand{\fV}{{\mathfrak{V}}}
\newcommand{\upchi}{{\raise.35ex\hbox{\ensuremath{\chi}}}}
\newcommand{\lip}{\langle}
\newcommand{\rip}{\rangle}
\newcommand{\ul}{\underline}
\newcommand{\Ltwo}{L^2(\bR)}
\newcommand{\Ltwotwo}{L^2(\bR^2)}
\newcommand{\Ltwod}{L^2(\bR^d)}
\newcommand{\Ltwohatd}{L^2(\hat{\bR}^d)}
\newcommand{\hrmra}{(\phi, \fV)}
\newcommand{\bmra}{(\phi, \{V_{(i,j)}\})}
\newcommand{\gld}{\mathrm{GL}_d(\bR)}
\begin{document}

\title[Higher Rank Wavelets]{Higher Rank  Wavelets}

\author[S. Olphert]{Sean Olphert}\thanks{The first author is
supported by an EPSRC grant}
\address{Dept.\ Math.\ Stats.\\ Lancaster University\\
Lancaster LA1 4YF \\U.K. } \email{s.olphert@lancaster.ac.uk}

\author[S.C. Power]{Stephen C. Power}
\thanks{The second author is partially supported by
EPSRC grant EP/E002625/1 }
\address{Dept.\ Math.\ Stats.\\ Lancaster University\\
Lancaster LA1 4YF \\U.K. } \email{s.power@lancaster.ac.uk}

\begin{abstract}
A theory of higher rank multiresolution analysis is given in the
setting of abelian multiscalings. This theory enables the
construction, from a higher rank MRA, of  finite wavelet sets
whose multidilations have translates forming an orthonormal basis
in $L^2(\bR^d)$. While tensor products of uniscaled MRAs provide
simple examples we construct many nonseparable higher rank
wavelets. In particular we construct \textit{Latin square
wavelets} as rank $2$ variants of Haar wavelets. Also we construct
nonseparable scaling functions for rank $2$ variants of Meyer
wavelet scaling functions, and we construct the associated
nonseparable wavelets with compactly supported Fourier transforms.
On the other hand we show that compactly supported scaling
functions for biscaled MRAs are necessarily separable.
\end{abstract}

\thanks{2000 {\it  Mathematics Subject Classification.}
42C40, 42A65, 42A16, 43A65}
\thanks{{\it Key words and phrases:}
wavelet, multi-scaling, higher rank, multiresolution, Latin
squares}
\date{}
\maketitle

\section{Introduction}\label{S:intro}

The term multiscaling in wavelet theory commonly refers to the
various scaling levels present in a nest of subspaces
$$\dots
\subseteq V_{-1} \subseteq V_0 \subseteq V_1 \subseteq \dots
$$ of
a multiresolution analysis in $L^2(\bR^d)$. See, for example,
Dutkay and Jorgensen \cite{DutPal}. However such subspaces are
associated with the powers of a {single} dilation matrix. In
contrast we develop here a theory of wavelets for higher rank
multiresolution analyses which are generated by several
independent commuting dilation matrices.

Recall that a wavelet set, or multiwavelet, is generally taken to
be a set of
 functions
 $\psi_1(x),\dots ,\psi_t(x)$ in $L^2(\bR^d)$ for which certain
 translates of dilates form an orthonormal basis of the form
\[
\{(\det A)^{-\frac{m}{2}}\psi_i(A^mx + k): m\in \bZ, k \in \bZ^d,
1\le i \le t\},
\]
where $A$ is a scaling matrix in $GL(\bR^n)$. Wavelet theory is
concerned with identifying constructions for which the wavelets
$\psi_i$ exhibit forms of directionality and smoothness. In
particular it has been of interest to obtain multivariable
wavelets which are in some sense nonseparable with respect to the
coordinates of $\bR^d$. See for example  \cite{BelWan},
\cite{CohDau}, \cite{GroMad}, \cite{HeLai}, \cite{KovVet},
\cite{Li}, \cite{Wan}. In particular Belogay and Wang
\cite{BelWan} construct nonseparable wavelets in $\bR^2$, for some
dilation matrices with determinant $2$, which are arbitrarily
smooth.

Most commonly, particularly in multiresolution analysis, the
dilation group is singly generated, as above. Some recent studies
with multiscalings that go beyond this are summarised in Gu,
Labate, Lim, Weiss and Wilson \cite{GuoLabLimWei}. In these
settings volume-preserving sheering unitaries combine with a
single strict dilation to generate the nonabelian dilation groups
of interest, and associated wavelet sets are constructed. Also
these so-called affine orthonormal systems exhibit specific forms
of directionality and nonseparability. In contrast to this our
wavelets are constructed for a dilation representation of the
abelian group $\bZ^r$.

There are two main approaches to constructing wavelet sets, namely
the multiresolution analysis approach of Mallat \cite{Mal}, on the
one hand, and constructions associated with self-similar tilings
of $\bR^d$ on the other. See Wang \cite{Wan}, for example, for
connections with tilings.
 As far as the authors are aware there
has been little development of traditional multiresolution
analysis wavelet theory for multiscaled settings in which the role
of dilation matrices $A^m$ is played by an abelian group of
dilation matrices $A_1^{m_1}A_2^{m_2}\dots A^{m_s}_r$.
 The simplest such
multiscaled context of this kind is the biscaled dyadic case for
wavelets in $\Ltwotwo$ associated with the dilation matrices
$$A=\begin{bmatrix}
2&0\\0&1\end{bmatrix}, B=\begin{bmatrix} 1&0\\0&2\end{bmatrix}.
$$
If $\psi_A(x)$ and $\psi_B(x)$ are univariate dyadic wavelets in
$\Ltwo$ then the separable function $\psi(x,y) =
\psi_A(x)\psi_B(y)$ is a biscaled wavelet, that is the set
\[
\{2^{-(m+n)/2}\psi(A^mB^nx + k): (m,n)\in \bZ^2, k \in \bZ^2\},
\]
is an orthonormal basis in $L^2(\bR^2)$.

The evident nonlocality of such separable wavelets has possibly
not encouraged the elaboration of a multiscaled wavelet theory.
(See, for example, the discussion in \cite{Woj}.)  However, we
shall show that even in this apparently adverse setting of
separated coordinates it is possible to construct nonseparable
wavelets and even nonseparable scaling functions. Moreover we
develop what might be termed a theory of higher rank wavelets and
multiresolution analysis.

We define  a higher rank multiresolution analysis $(\phi , \fV)$,
of rank $r$, where $\fV = \{V_{\ul{i}}:\ul{i} \in \bZ^r\}$ is a
commuting lattice of closed subspaces of $\Ltwod$ with appropriate
inclusions. See Definition \ref{def:bmra} and Section 2.8.
 In our first main result Theorem \ref{W:waveletthm}
 we show how one may construct  multiscaled wavelet sets from a higher rank
multiresolution of rank two.

The distinguished novelties of higher rank multiresolution
analysis  are already present  in the simplest setting of
biscaling ($r = 2$) and dimension 2 ($d =2$).  For such a biscaled
multiresolution analysis (BMRA)  the scaling function $\phi(x,y)$
possesses two marginal filter functions $m_\phi^A(\xi),
m_\phi^B(\xi)$ and these must satisfy the intertwining relation
\begin{equation}
m_\phi^A(B\xi)m_\phi^B(\xi) = m_\phi^A(\xi)m_\phi^B(A\xi).
\end{equation}
This filter relation follows from the coincidence of the tripe
subspace inclusions associated with  the dilation pairs $B, AB$
and $A, BA$. That is, the intertwining relation is a consequence
of the lattice structure of $\fV$. In addition to this the
orthogonality structure, or \textit{commuting} (projection
lattice) structure, of the BMRA leads to a filter identity of some
complexity namely,
\medskip

\[
m^A_\phi(\xi_1+\pi,2\xi_2) m^A_\phi(\xi_1,\xi_2)\overline{
m^B_\phi(\xi_1,\xi_2)m^B_\phi(2\xi_1,\xi_2+\pi)} -
\]
\[
m^A_\phi(\xi_1,2\xi_2)m^A_\phi(\xi_1+\pi,\xi_2)\overline{
m^B_\phi(\xi_1+\pi,\xi_2)m^B_\phi(2\xi_1,\xi_2+\pi)}-
\]
\[
 m^A_\phi(\xi_1+\pi,2\xi_2) m^A_\phi(\xi_1,\xi_2+\pi)\overline{
m^B_\phi(\xi_1,\xi_2+\pi)m^B_\phi(2\xi_1,\xi_2)} +
\]
\[
m^A_\phi(\xi_1,2\xi_2)m^A_\phi(\xi_1+\pi,\xi_2+\pi)\overline{
m^B_\phi(\xi_1+\pi,\xi_2+\pi)m^B_\phi(2\xi_1,\xi_2)} =0.
\]
\medskip

These two necessary conditions present challenges for the
construction of nonseparable scaling functions. Nevertheless, in
another of our main results, Theorem \ref{t:rank2meyer}, we
construct examples of BMRAs for which the scaling function is
indeed nonseparable. In this case the derived wavelets are
similarly nonseparable. Furthermore, they inherit smoothness from
$\phi$. In this way we obtain singleton bidyadic wavelets whose
Fourier transforms have compact support and these wavelets are in
fact  higher ranks variants of the well-known Meyer wavelets
\cite{Mey}.

On the other hand, even for separable rank 2 multiresolutions and
separable scaling functions we show that there is sufficient
freedom in the nondyadic case
$$(\det A-1)(\det B-1) \ge 2$$ to derive nonseparable wavelets.
This is achieved by constructing filter matrix functions which are
not elementary tensor products. The construction of these filter
functions parallels the well-known method of unitary matrix
completion although our arguments, given in the proof of Theorem
\ref{W:waveletthm}, involve a nesting of several Gram Schmidt
completion processes. The wavelet sets here include some very
interesting and computable examples of what we term \textit{Latin
square wavelet sets}, for evident reasons. (See Theorem
\ref{t:latinsquare}.) These are multiscaled versions of the
classical wavelets associated with Haar bases.


In another of our main results we  reveal a striking constraint
for compactly supported scaling functions in the biscaled theory
in $L^2(\bR^2)$, namely that  such functions are necessarily
separable. Equivalently put, a BMRA $(\phi , \fV)$ with compactly
supported scaling function $\phi$ is equivalent to an elementary
tensor of two uniscaled MRAs. This fact may have been a further
implicit obstacle to the development of multiscaled
multiresolution wavelets.

There are many intriguing wavelet directions that now seem to
beckon. For example, we have not particularly addressed smoothness
and approximation properties in this article.  Although we have
shown that the generalised Meyer context, with scaling functions
with compactly supported Fourier transforms, allows the appearance
of nonseparable wavelets, the context is nevertheless a
constraining one. Also, we have shown that the intertwining
relation is an exactitude that rules out compactly supported
higher rank scaling functions. It is plausible that the setting of
frames and higher rank GMRAs, which relaxes this condition in some
way, may allow for the construction of compactly supported higher
rank frames. Moreover it will also be of interest to develop this
theory further for dilation representations of $\bZ^d$ on $\bR^n$
other than the basic ones we consider.

 Even in theoretical articles such as this one it is
customary to make a few remarks concerning the potential
efficiency of any new species of wavelets or frames. In the
current  climate of diverse and burgeoning applications this
hardly seems necessary. However, we remark that nature often
presents pairs of partially independent features with their own
scaling aspects. Space-time scales are one obvious source of this.
A purely spatial context can be found in the statistical theory of
textures. This theory is suitable for localised analysis through
wavelets (see \cite{EckNasTre}) and, evidently, many natural
textures have a rectangular emphasis.


Our account is self-contained with complete proofs apart from a
few basic standard lemmas. In particular in Section we reprove the
construction and formula of a uniscaled dyadic wavelet associated
with an MRA, and in Section 7 we give the construction scheme for
classical Meyer wavelets. We refer the reader to the books of
Wojtaszczyk \cite{Woj} and Brattelli and Jorgensen \cite{BraJor}
which are excellent sources for diverse wavelet theory.

\section{Higher Rank Multi-resolution Analysis}

In this section we formulate the structure of a higher rank
multiresolution analysis although we focus attention on the
biscaled case of Definition \ref{def:bmra}. We obtain the
frequency domain identification of a function $f(x)$ in $V_{i,j}$
in terms of a filter function $m_f(\xi)$ and the Fourier transform
$\hat{\phi}(\xi)$. As preparation for the construction of wavelets
from a BMRA we obtain a filter function  matrix criterion in order
that a given set $\{f_1,\dots , f_r\}$ of unit vectors in $\Ltwod$
should generate an orthonormal basis under the operations of
translation and bidilation.

\subsection{Preliminaries}
We take the Fourier transform $\hat{f}$ of an integrable function
$f(x)$ on $\bR^d$ in the form
\[
\hat{f}(\xi)=\frac{1}{(2\pi)^{\frac{d}{2}}}\int_{\bR^d}e^{-i\lip
\xi,x \rip }f(x)dx,
\]
and define $\hat{f}$ for $f$ in $\Ltwod$ by the usual unitary
extension of the map $f \to \hat{f}$. The domain of Fourier
transforms is referred to as the frequency domain and for emphasis
is denoted $\hat{\bR}^d$. For a matrix $A \in GL_d(\bR)$ and a
point $x$ in the time domain $\bR^d$ write $Ax$ the product with
$x$ viewed as a column vector and define the unitary operator $D_A
: \Ltwod \to \Ltwod$ by
\begin{equation}
(D_Af)(x) = |\det A|^{1/2}f(Ax).
\end{equation}

Recall that  $A$ is said to be an \emph{expansive} matrix if all
eigenvalues of $A$ have absolute value greater than one. We say
here that $A$ is a \emph{dilation matrix} if in addition
$A(\bZ^d)\subset\bZ^d$, that is, if $A$ is a matrix of integers.

A \emph{commuting dilation pair} $(A,B)$ is a pair of matrices in
$ GL_d(\bR)$ such that, after a suitable permutation of the
standard basis of $\bR^d$ we have
\begin{align}
A&=\begin{bmatrix} A'&0\\0&I_{d-p}\end{bmatrix},&
B=\begin{bmatrix} I_{d-q}&0\\0&B'\end{bmatrix}\label{CDP1}
\end{align}
 where
$A'$ and $B'$ are dilation matrices in $GL_p(\bR)$ and
$GL_q(\bR)$. Note that $T= A^nB^m$ is a dilation matrix if and
only if $n\ge 1$ and $m \ge 1$, while the matrices $A^i$ and $B^j$
are {\it weak dilation matrices} in the sense that their
eigenvalues are not less than $1$. In Sections 5, 6 and 7 we
confine attention to wavelets in $\Ltwod$ for a fundamental
dilation pair
\[
 A
 =\begin{bmatrix}
\alpha I &0\\0& I\end{bmatrix}, \quad \quad B= \begin{bmatrix}
I&0\\0&\beta I\end{bmatrix}
\]
where $\alpha , \beta \ge 2$ are integers.

\subsection{Biscaled Multiresolution Analysis}

We now define a biscaled  analogue of a multiresolution analysis
associated with a single dilation matrix.

\begin{defn}\label{def:bmra}
A biscaled multiresolution analysis, or BMRA, with respect to the
commuting dilation pair $(A, B)$ in $\gld$  is pair $\hrmra$ where
$\phi \in \Ltwod$ and $\fV = \{V_{i,j} : (i,j) \in \bZ^2\}$ is a
family of closed subspaces of $\Ltwod$ such that
\medskip

(i) $V_{i,j} \subseteq V_{l,m}$,  for  $ i \leq l,$ ~$ j \leq m$,

(ii) $D_A^iD_B^jV_{0,0} = V_{i,j}$, for all $i,j$,

(iii) $\bigcap_{(i,j)\in\bZ^2} V_{i,j}= \{0\},$

(iv) $\overline{\bigcup_{(i,j)\in\bZ^2} V_{i,j}} =\Ltwod$,

(v) $P_{i,j}P_{l,m}=P_{l,m}P_{i,j}$ for all orthogonal projections
$P_{s,t}:\Ltwod \to V_{s,t}$.

(vi)  $\{\phi(x-k):k\in\bZ^d\}$
 is an orthonormal basis for  $V_{0,0}$.
\end{defn}
\medskip

It is helpful to make the following distinctions.

If we have merely conditions (i), (iii), (iv) then we say that
$\fV$ is a \textit{grid} of subspaces. If, additionally,
$V_{i,j}\cap V_{k,l}=V_{m,n}$ with $m=min\{i,k\}$, $n=min\{j,l\}$,
for all $i,j,k,l$, then we say that $\fV$ is a \textit{lattice} of
subspaces. Finally, if, additionally, the stronger commuting
condition (v) holds,then we say that $\fV$ is a \textit{commuting
lattice} of subspaces. Thus a BMRA is a commuting lattice of type
$\bZ \times \bZ$ which is generated by a single function $\phi$
and the dilation operators.

We begin the analysis of BMRAs by identifying the various
frequency domain descriptions of functions in $V_{i,j}$.

Let $f \in V_{i,j}$, $T= A^iB^j$ and let $t = \det T$. Then
$f\circ T^{-1}$ is in $V_{0,0}$ and so there is an expansion
\[
f(T^{-1}x)= \sum_{k\in \bZ^d}c_f(k) \phi(x-k)
\]
in $\Ltwod$ for some square summable   sequence $\{c_f(k)\}_{k\in
\bZ^d}$. This sequence provides the \textit{filter coefficients}
of $f$ for the dilation $T$.  Define the $2\pi \bZ^d$-periodic
function $m_f(\xi)$ in $\Ltwohatd$ by
\[
m_f(\xi) = \frac{1}{t}\sum_{k\in \bZ^d}c_f(k)e^{-i\lip \xi,k\rip}.
\]
This function is the \textit{filter function} for $f$ associated
with $T$. Note that
 $\widehat{f\circ T^{-1}}(\xi) = \hat{f}(T^*\xi)$ and on the other hand
\[
 \widehat{(f\circ T^{-1})}(\xi)=\sum_{k\in \bZ^d}c_f(k)e^{-i
\lip \xi, k \rip}\hat{\phi}(\xi).
\]
Thus $\hat{f}(T^*\xi)= m_f(\xi)\hat{\phi}(\xi)$ and so the filter
function determines $f$ in $TV_{0,0}$ by means of the formula
\begin{equation}\label{e:filter}
\hat{f}(\xi) = m_f(T^{*-1}\xi)\hat{\phi}(T^{*-1}\xi).
\end{equation}
We may view the filter function $m_f(\xi)$ as being defined on
$2\pi\bT^d$, in which case one readily sees that
\[
\|m_f\|_{L^2(2\pi\bT^d)} = \frac{1}{\sqrt{t}}\|f\|_2.
\]
In view of our applications and for notational simplicity we
assume henceforth that $A=A^*, B= B^*$ and hence $T=T^*$.

Note that $m_f(T^{-1}\xi)$ is periodic by translates from $2\pi
T\bZ^d$ and that we have the following converse. If $g(\xi)$ is
$2\pi T\bZ^d$ periodic and has square integrable restriction to
$2\pi \bT^d$ then the function $f$ in $\Ltwod$ with $\hat{f}(\xi)
= g(\xi)\hat{\phi}(T^{-1}\xi)$ lies in $V_{i,j}.$ In particular we
have the following criterion for membership of $V_{0,0}$ which we
state explicitly as it will play a role in the proof of the
lattice property of a BMRA.

\begin{prop}\label{p:V00}
Let $(\phi, \fV)$ be a BMRA. Then a function $f$ in $L^2(\bR^d)$
lies in $V_{0,0}$ if and only if $\hat{f}(\xi) =
g(\xi)\hat{\phi}(\xi)$ for some $2\pi \bZ^d$ periodic function $g$
which has square summable restriction to $2 \pi \bT^d$.
\end{prop}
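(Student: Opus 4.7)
The plan is to exploit the standard Fourier series duality between $\ell^2(\bZ^d)$ and $L^2(2\pi \bT^d)$, together with axiom (vi) of Definition \ref{def:bmra}. The equivalence in the proposition is essentially the frequency-side incarnation of the fact that the integer translates of $\phi$ form an orthonormal basis of $V_{0,0}$.

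For the forward direction, I take $f \in V_{0,0}$ and use axiom (vi) to produce a square summable sequence $\{c(k)\}_{k\in\bZ^d}$ with $f = \sum_k c(k)\phi(\cdot-k)$ in $\Ltwod$. Since the Fourier transform is a bounded operator, the expansion transfers term-by-term to the frequency side, and because translation by $k$ becomes multiplication by $e^{-i\langle\xi,k\rangle}$, I obtain $\hat{f}(\xi) = g(\xi)\hat{\phi}(\xi)$ with
\[
g(\xi) = \sum_{k\in\bZ^d} c(k)\, e^{-i\langle\xi,k\rangle}.
\]
This series converges in $L^2(2\pi\bT^d)$ by the $\ell^2$-to-$L^2$ isometry on the torus, so $g$ extends to a $2\pi\bZ^d$-periodic function with square summable restriction to $2\pi\bT^d$. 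For the converse, starting from such a $g$, I expand $g$ in its Fourier series $g(\xi) = \sum_k c(k)e^{-i\langle\xi,k\rangle}$ with $\{c(k)\}\in\ell^2(\bZ^d)$, and set $\td{f} := \sum_k c(k)\phi(\cdot-k)$. This series converges in $\Ltwod$ by orthonormality of the translates, and $\td{f}$ lies in $V_{0,0}$ by definition. The forward computation applied to $\td{f}$ gives $\hat{\td{f}}(\xi) = g(\xi)\hat{\phi}(\xi) = \hat{f}(\xi)$, hence $f = \td{f} \in V_{0,0}$.

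The only subtle point — really a matter of bookkeeping rather than a substantive obstacle — is the justification that the correspondence $\{c(k)\} \mapsto \sum_k c(k)\phi(\cdot-k)$ is a Hilbert space isomorphism from $\ell^2(\bZ^d)$ onto $V_{0,0}$, and correspondingly that $g \mapsto g\hat{\phi}$ is well defined and injective from the stated class of periodic functions into $\Ltwohatd$. Both properties are equivalent to the familiar frequency-side orthonormality identity $\sum_{k\in\bZ^d}|\hat{\phi}(\xi + 2\pi k)|^2 = \text{const}$ a.e., which in turn is equivalent to axiom (vi) by Plancherel's theorem. This equivalence is what lets me pass freely between the time-domain expansion of $f$ and the frequency-domain factorisation through $\hat{\phi}$, in both directions.
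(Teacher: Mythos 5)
Your proof is correct and follows essentially the same route as the paper, which establishes the proposition as the $T=I$ special case of the filter-function correspondence developed just before its statement: expand $f$ in the orthonormal translates of $\phi$, pass to the frequency domain to get the periodic multiplier $g$, and invoke the identity $\sum_{k}|\hat{\phi}(\xi+2\pi k)|^2=\mathrm{const}$ (Lemma \ref{l:onset}) to justify the two-sided passage between the $\ell^2$ coefficient sequence and the $L^2(2\pi\bT^d)$ multiplier. Nothing further is needed.
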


Consider now the cosets of $T\bZ^d$ in the abelian group $\bZ^d$,
of which there are $t$ in number, say $E_0,\dots ,E_{t-1}$. (See
Proposition 5.5 in \cite{Woj} for example.) Representative
elements $\Gamma_0, \dots ,\Gamma_{t-1}$ of the cosets are often
known as \textit{digits} and comprise a \textit{set of digits for}
$T$.

For $0\leq i \leq t-1$ define the $i^{th}$ translate function for
the filter $m_f(\xi)$ as the function
\begin{equation}
m_f^i(\xi)=m_f(\xi+2\pi T^{-1} \Gamma_i).
\end{equation}
Note that from the norm correspondence above it follows that if
$f$ is a unit vector then the row vector formed by the $t$
translates $m_f^i(\xi)$ is a unit vector almost everywhere on
$2\pi\bT^d$.

\begin{defn}\label{d:cosetmatrix}
Let $(\phi, \fV)$ be a biscaled multiresolution analysis, let
($i,j) \in \bZ^2$, and let $\F =\{f_0,\dots ,f_s\}$ be an ordered
set of functions in $V_{i,j}$. The (translation form) filter
matrix for $\F$ is the function matrix $U_\F(\xi)$ where
\[
U_\F(\xi) = [m^i_{f_l}(\xi)]_{l=0,i=0}^{s, t-1}
\]
where $t = \det A^iB^j$.
\end{defn}

We remark that in Section 4 we define a related filter matrix
$U'_\F(\xi)$ whose rows are formed by the partial sums over cosets
of the filter functions in the first column of $U_\K$.

The key filter matrix lemma below  will be used in the
construction of biscaled wavelets and in this connection  it will
be applied to various pairs $X, Y$ from the spaces $V_{0,0},
V_{1,0}, V_{0,1}, V_{1,1}$. The proof is similar to well known
constructions in the the monoscaled case.

Let $X \subseteq Y$ be closed subspaces of $\Ltwod$ with $Y=D_TX$
and let $\phi$ be a scaling function for $X, Y$ in the sense that
the translates of $\phi$ form an orthonormal basis for $X$.
 In the light of our
applications later we make the simplifying assumption that the
digits are chosen so that the set $T[0,2\pi]^d$ is the essentially
disjoint union of the sets $[0,2\pi]^d+ 2\pi \Gamma_i, i = 0,\dots
,t-1$.

First we note the following well known simple property of scaling
functions which is equivalent to the orthonormality of translates.
This will feature in the proof of Lemma \ref{l:filtermatrix} and
in the construction of higher rank Meyer wavelets.

\begin{lem}\label{l:onset}
The set of $\bZ^d$-translates of a function $\phi$ in $\Ltwod$
forms an orthonormal set if and only if
\[
\sum_{k\in\bZ^d}|\hat{\phi}(\eta-2\pi k)|^2=\frac{1}{(2\pi)^{d}}.
\]
\end{lem}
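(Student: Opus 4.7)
The plan is to reduce the claim to a statement about Fourier coefficients on the $d$-dimensional torus. First I would observe that since $\langle \phi(\cdot-m),\phi(\cdot-n)\rangle$ depends only on $m-n$, orthonormality of the translates is equivalent to
\[
\langle \phi, \phi(\cdot - k)\rangle = \delta_{k,0} \qforal k\in\bZ^d.
\]
Applying Plancherel for the given normalisation of $\hat{}$, together with the shift identity $\widehat{\phi(\cdot-k)}(\xi)=e^{-i\langle\xi,k\rangle}\hat\phi(\xi)$, this inner product becomes
\[
\int_{\bR^d}|\hat\phi(\xi)|^2 e^{i\langle\xi,k\rangle}\,d\xi.
\]

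Next I would periodise. Writing $\bR^d$ as the essentially disjoint union of the cubes $[0,2\pi]^d+2\pi j$, $j\in\bZ^d$, and using that $e^{i\langle 2\pi j,k\rangle}=1$ for $k\in\bZ^d$, the integral above rearranges as
\[
\int_{[0,2\pi]^d}F(\eta)\,e^{i\langle\eta,k\rangle}\,d\eta,
\qquad\text{where }F(\eta):=\sum_{j\in\bZ^d}|\hat\phi(\eta-2\pi j)|^2.
\]
(The interchange of sum and integral is justified by $\hat\phi\in L^2$, which makes $F\in L^1([0,2\pi]^d)$.)

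Finally, the orthonormality condition becomes the statement that the Fourier coefficients of the $2\pi\bZ^d$-periodic function $F$ on the cube $[0,2\pi]^d$ all vanish except the zeroth one, which must equal $\delta_{k,0}$. By the uniqueness theorem for Fourier series this holds iff $F$ is (a.e.) the constant function whose zeroth Fourier coefficient equals $1$; in view of the volume $(2\pi)^d$ of $[0,2\pi]^d$, that constant is $(2\pi)^{-d}$, which is exactly the claimed identity. There is no real obstacle here beyond keeping track of the $(2\pi)^{d/2}$ factors built into the Fourier transform convention; in particular, the constant $(2\pi)^{-d}$ is forced by the chosen normalisation and would change under a different convention.
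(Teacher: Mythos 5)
Your argument is correct and is the standard periodisation proof: Plancherel plus the shift identity turns the inner products into Fourier coefficients of the periodised function $F$, and uniqueness of Fourier coefficients forces $F$ to be the constant $(2\pi)^{-d}$. The paper itself gives no proof of this lemma (it is stated as a well-known fact, with the reader referred to standard sources), and your route is exactly the one those sources, and evidently the authors, have in mind, including the correct bookkeeping of the $(2\pi)^{d/2}$ normalisation.
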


\begin{lem}\label{l:filtermatrix}
Let $(\phi, X, T, Y, \{\Gamma_i\})$ be as above and let $\F =
\{f_0,\dots ,f_s\}$ be a finite ordered set in $Y$ with filter
matrix $U_\F(\xi)$. Then the system
$\{f_l(x-k)\}_{k\in\bZ^d,l=0,...,s}$ is an orthonormal set in $Y$
if and only if for almost every $\xi$ the matrix $U_\F(\xi)$
 is a partial isometry with full range. Furthermore the system is
 an orthonormal basis for $Y$
if and only if $s=t-1$ and for almost every $\xi$ the matrix
$U_\F(\xi)$ is unitary.
\end{lem}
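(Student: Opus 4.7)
I would prove the lemma by translating both orthonormality and completeness of the system into pointwise matrix identities for $U_\F(\xi)$, using the filter identification \eqref{e:filter} and the coset decomposition $\bZ^d = \bigsqcup_{i=0}^{t-1}(T\bZ^d + \Gamma_i)$.

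For the first equivalence I would invoke the standard Parseval/periodization criterion that $\{f_l(\cdot - k) : k\in\bZ^d,\,0\le l\le s\}$ is orthonormal if and only if
\[
\sum_{k\in\bZ^d}\hat{f}_l(\eta + 2\pi k)\,\overline{\hat{f}_{l'}(\eta + 2\pi k)} = \frac{\delta_{l,l'}}{(2\pi)^d}
\]
for a.e.\ $\eta$. Substituting \eqref{e:filter}, writing $k = Tn + \Gamma_i$, and using the $2\pi\bZ^d$-periodicity of $m_{f_l}$ extracts a factor $m_{f_l}^i(T^{-1}\eta)\,\overline{m_{f_{l'}}^i(T^{-1}\eta)}$ from each coset, while Lemma \ref{l:onset} applied to $\phi$ collapses the remaining sum of $|\hat\phi|^2$ over $n$ to $(2\pi)^{-d}$. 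Setting $\xi = T^{-1}\eta$, the criterion reduces to $U_\F(\xi)U_\F(\xi)^* = I_{s+1}$ a.e., which is precisely the statement that $U_\F(\xi)$ is a partial isometry whose range equals $\bC^{s+1}$.

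For the ONB statement the extra ingredient is completeness. Given $g\in Y = D_TX$, Proposition \ref{p:V00} applied to $D_T^{-1}g\in X$ produces a unique $2\pi\bZ^d$-periodic filter $m_g$ with $\hat g(\xi) = m_g(T^{-1}\xi)\hat\phi(T^{-1}\xi)$. A proposed expansion $g = \sum_{l,k}c_{l,k}f_l(\cdot - k)$, by that same uniqueness, is equivalent to the filter identity
\[
m_g(\eta) = \sum_{l=0}^{s} m_{f_l}(\eta)\,p_l(T\eta), \qquad p_l(\zeta)=\sum_{k}c_{l,k}e^{-i\langle\zeta,k\rangle}.
\]
Evaluating at the $t$ points $\eta + 2\pi T^{-1}\Gamma_i$, and noting that $p_l\circ T$ is invariant under shifts by $2\pi T^{-1}\bZ^d$, produces the vector equation $U_\F(\eta)^{T}\bigl(p_l(T\eta)\bigr)_{l} = \bigl(m_g^i(\eta)\bigr)_{i}$. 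Solvability for every $g\in Y$, i.e.\ for every $m_g\in L^2(2\pi\bT^d)$, forces $U_\F(\eta)^T$ to be surjective a.e., so the rank of $U_\F$ is $t$. Combined with the row-orthonormality derived above (which forces $s+1\le t$), this gives $s+1=t$ and $U_\F(\xi)$ unitary a.e.\ Conversely, when $U_\F$ is square and unitary, $(U_\F^T)^{-1}\bigl(m_g^i\bigr)_{i}$ produces $L^2(2\pi\bT^d)$ entries $p_l\circ T$ and hence a valid $\ell^2$ coefficient sequence, yielding the expansion.

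The main obstacle here is bookkeeping rather than substance: one must be careful when passing between sums over $\bZ^d$ and the coset quotient via the digit set $\{\Gamma_i\}$, and must avoid illicit division by $\hat\phi$ on its zero set. The first is handled using the hypothesis that $T[0,2\pi]^d$ is the essentially disjoint union of the translates $[0,2\pi]^d + 2\pi\Gamma_i$, and the second by appealing to the uniqueness in Proposition \ref{p:V00} to identify filters rather than to solve for them algebraically.
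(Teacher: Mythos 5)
Your proposal is correct and follows essentially the same route as the paper: the first equivalence is obtained by periodizing $\hat f_l\overline{\hat f_{l'}}$, splitting the sum over $\bZ^d$ into the cosets $T\bZ^d+\Gamma_i$, and collapsing the $\phi$-periodization via Lemma \ref{l:onset} to reduce orthonormality to $U_\F U_\F^*=I_{s+1}$ a.e., which is exactly the computation in the paper's proof. For the basis statement the paper simply defers to ``the monoscaled case,'' and the completeness argument you supply (solving $U_\F(\eta)^T\bigl(p_l(T\eta)\bigr)_l=\bigl(m_g^i(\eta)\bigr)_i$ and forcing a.e.\ surjectivity, hence $s+1=t$ and unitarity) is precisely that standard argument, correctly adapted.
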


\begin{proof}
Let $f, g$ be two functions in $\F$. We evaluate the inner product
$I = \lip f(x-k_1),g(x-k_2)\rip$ for $k_1, k_2\in\bZ^d$. Since
$f(x-k)$ has Fourier transform $e^{-i\lip\xi,k\rip}\hat{f}(\xi)$
the unitarity of the Fourier transform implies
\begin{gather}
I=\int_{\hat{\bR}^2} \hat{f}(\xi)\overline{\hat{g}(\xi)}e^{-i\lip
\xi,k_1-k_2\rip }d\xi. \intertext{Thus using \eqref{e:filter}, the
substitution $\eta=T^{-1}\xi$, and the $2\pi\bZ^d$ periodicity of
the filter functions, we have}
 I=
\int_{\hat{\bR}^d}m_f(T^{-1}\xi)\overline{m_g(T^{-1}\xi)}
|\hat{\phi}(T^{-1}\xi)|^2e^{-i\lip\xi,k_1-k_2\rip
} d\xi
\\
=t\int_{\hat{\bR}^d}m_f(\eta)\overline{m_g(\eta)}|\hat{\phi}(\eta)|^2e^{-i\lip
T\eta,k_1-k_2\rip} d\eta
\\
= \sum_{k\in\bZ^d}
t\int_{[0,2\pi]^d}m_f(\eta)\overline{m_g(\eta)}|\hat{\phi}(\eta
-2\pi k)|^2e^{-i\lip T\eta,k_1-k_2\rip} d\eta.
\\
\intertext{By the last lemma,}
\sum_{k\in\bZ^d}|\hat{\phi}(\eta-2\pi k)|^2=\frac{1}{(2\pi)^{d}}
\intertext{and so}
I=\frac{t}{(2\pi)^{d}}\int_{[0,2\pi]^d}m_f(\eta)
\overline{m_g(\eta)}e^{-i\lip T\eta,k_1-k_2\rip }d \eta
\intertext{thus}
I=\frac{1}{(2\pi)^{d}}\int_{T[0,2\pi]^d}m_f(T^{-1}\xi)
\overline{m_g(T^{-1}\xi)}e^{-i\lip \xi,k_1-k_2\rip }d \xi.
\intertext{Considering translates by $2\pi\Gamma_i$, and using
periodicity in the exponential factor we  conclude that the inner
product $\lip f(x-k_1),g(x-k_2)\rip$ is equal to}
\frac{1}{(2\pi)^{d}}\int_{[0,2\pi]^d} \left(\sum_{i=0}^{t-1} m^i_f
(T^{-1}\xi)\overline{m^i_g(T^{-1}\xi)}\right)e^{-i(\lip \xi
,k_1-k_2\rip)}d\xi.
\end{gather}
Now the sum function in the integrand is not merely $2\pi T\bZ^d$
periodic (in view of its terms) but is $2\pi \bZ^d$ periodic by
virtue of being a sum over all  translates. Thus the integral is
the $(k_1-k_2)$th Fourier coefficient of the sum function. In
particular, with $f=g$ we deduce that the row vector function
\[
[m^0_f(\xi) \quad m_f^1(\xi)\quad  \cdots \quad m_f^{t-1}(\xi)]
\]
is a unit vector almost everywhere.
 Now the
lemma follows exactly as in the monoscaled case.
\end{proof}

\subsection{Separability and Higher Rank.}
We define a general \textit{higher rank multiresolution analysis}
for a commuting $r$-tuple $(A_1,\dotsc,A_r)$ in $\gld$ to be a
pair $\hrmra$, where $\phi \in \Ltwod$ and $\fV=\{V_k:k\in\bZ^r\}$
is a family of closed subspaces satisfying the $r$-fold version of
the conditions (i),$\dotsc$,(vi) of Definition \ref{def:bmra}.
The $r$-tuple is assumed to take the form
$$
A_1 = A_1'\oplus I_{d_2}\oplus\dots\oplus
I_{d_r},\dotsc,A_r=I_{d_1}\oplus\dots\oplus I_{d_{r-1}}\oplus
A_r',
$$
where $d_1+d_2+\dots+d_r=d$ and $A_i'\in GL_{d_i}(\bR)$ are
dilation matrices (with integer entries).

The simplest way to create such a multiresolution analysis is as
the tensor product
\[
\left( \phi_1\otimes\dotsm\otimes\phi_r,\fV_1\otimes\dotsm\otimes
\fV_r\right)
\]
of MRAs $(\phi_i,\fV_i)$, with
$\fV_k=\left\{V_i^{(k)}:i\in\bZ\right\}$, where
\[
\fV_1\otimes\dotsm\otimes\fV_r=\left\{V_k=V_{k_1}^{(1)}\otimes\dotsm\otimes
V_{k_r}^{(r)}:k=(k_1,\dotsc,k_r)\in\bZ^r\right\}.
\]
Here the scaling function $\phi=\phi_1\otimes\dotsm\otimes\phi_r$
is separable in the sense that there is an evident partition of
the coordinates of $L^2(\bR^d)$ with
$\phi(x)=\phi_1(x)\phi_2(x)\dotsm\phi_r(x)$, where $\phi_i(x)$ is
a function of the coordinates in the $i$th partiton set.

We say that a higher rank MRA $\hrmra$ of rank $r$ is
\textit{purely separable} if it is unitarily equivalent to such an
$r$-fold tensor product.  For $r\geq 3$ there can be partial forms
of separability, which we do not discuss here, while for $r=2$ we
shall simply say that a BMRA $\hrmra$ is \textit{separable} if it
is unitarily equivalent to a 2-fold tensor product decomposition.

If $\bmra$ is a BMRA then selecting the diagonally labelled
subspaces $V_{i,i}$ gives rise to the MRA $(\phi,\{V_{i,i}\})$. We
may thus observe from known facts for MRAs that there are
redundancies in the conditions of Definition \ref{def:bmra}. Thus
condition (iii) follows from (i), (ii), (iv), (vi). See Lemma
\ref{intersectunionlem}.
 In general however, we have not found that the presence of MRAs in BMRAs
provides any shortcuts to the construction of higher rank
wavelets. One can develop formulae for filter functions of
functions in $V_k$ and pursue the theory for rank greater than two
but we do not do so here.

\section{Construction of Wavelets from BMRAs}

We now show how to construct wavelets and wavelet sets from a
given BMRA, and we elucidate the interrelationship between filter
functions.

Let $\hrmra$ be a BMRA in $\Ltwod$ for the dilation pair $(A,B)$.
We refine some notation for filter functions as follows.  If $f$
lies in $V_{1,0}$ then we write $m_f^A(\xi)$ for the
$2\pi\bZ^d$-periodic filter function $m_f(\xi)$ arising from
$T=A$, and if $f$ lies in $V_{0,1}$ we write $m_f^B(\xi)$ for the
filter function for $T=B$.  Thus if $f$ lies in $V_{1,0}\cap
V_{0,1}$ then we have, from \eqref{e:filter},
\begin{align*}
\hat{f}(\xi)&=m_f^A(A^{-1}\xi)\hat{\phi}(A^{-1}\xi),
\intertext{and}
\hat{f}(\xi)&=m_f^B(B^{-1}\xi)\hat{\phi}(B^{-1}\xi).
\end{align*}
In particular these remarks apply to $\phi$ itself and so
\begin{align}
\hat{\phi}(A\xi)&=m_\phi^A(\xi)\hat{\phi}(\xi) \label{scalinga};\\
\hat{\phi}(B\xi)&=m_\phi^B(\xi)\hat{\phi}(\xi) \label{scalingb}.
\end{align}
Put $B\xi$ for $\xi$ in \eqref{scalinga} and use \eqref{scalingb}
to obtain
\begin{align}
\hat{\phi}(AB\xi)&=m_\phi^A(B\xi)m_\phi^B(\xi)\hat{\phi}(\xi).
\intertext{Reciprocally}
\hat{\phi}(BA\xi)&=m_\phi^B(A\xi)m_\phi^A(\xi)\hat{\phi}(\xi).
\end{align}
Thus, if $\hat{\phi}(\xi)$ is nonvanishing almost everywhere then
we obtain the fundamental intertwining relation
\begin{equation}
m_\phi^A(B\xi)m_\phi^B(\xi)=m_\phi^A(\xi)m_\phi^B(A\xi).
\label{intertwine}
\end{equation}

Since $\fV$ is a commuting lattice the subspaces $V_{1,0}\ominus
V_{0,0}$ and $V_{0,1}\ominus V_{0,0}$ are orthogonal and so we may
define
\begin{equation*}
W_{0,0}=V_{1,1}\ominus\left((V_{1,0}\ominus
V_{0,0})\oplus(V_{0,1}\ominus V_{0,0})\oplus V_{0,0}\right).
\end{equation*}
Moreover, let $W_{i,j}=D_A^i D_B^j W_{0,0}$ for $(i,j)\in \bZ^2$,
so that
\begin{equation*}
W_{i,j}=V_{i+1,j+1}\ominus\left((V_{i+1,j}\ominus
V_{i,j})\oplus(V_{i,j+1}\ominus V_{i,j})\oplus V_{i,j}\right).
\end{equation*}
Thus  these spaces are orthogonal.  Moreover since the
intersection of the spaces $V_{i,j}$ is the zero space it follows
that
\begin{equation*}
V_{i+1,j+1}=\mathop{\sum \oplus}_{(m,n)\leq(i,j)} W_{m,n},
\end{equation*}
and since the union of the $V_{i,j}$ is dense, $\Ltwod$ is the
Hilbert space direct sum of all the $W_{m,n}$.

 As in the
monoscaled theory, if an explicit orthonormal basis \\
$\{\psi_1,\dotsc,\psi_s\}$ can be constructed for the subspace
$W_{0,0}$ then this is a wavelet set in the sense of the following
definition.
\begin{defn}\label{X:waveletset}
Let $(A,B)$ be a commuting dilation pair in $GL_d(\bR)$.  Then
$\{\psi_1,\dotsc,\psi_s\}$ is a wavelet set for $(A,B)$ if
\begin{equation*}
\left\{|\det(A^m B^n)|^{\frac{1}{2}}\psi_i(A^m B^nx+k):(m,n)\in
\bZ^2,k\in\bZ^d,1\leq i\leq s\right\}
\end{equation*}
is an orthonormal basis in $L^2(\bR^d)$.
\end{defn}

We now show how one can construct a wavelet set in $W_{0,0}$ by
means of a nested Gram-Schmidt orthogonalisation process and
repeated applications of both directions of the equivalences given
in Lemma \ref{l:filtermatrix}.

It is convenient to introduce the following terminology which
anticipates the construction process. As before we consider a BMRA
$\hrmra$ for the dilation pair $(A,B)$, where $\fV$ satisfies the
lattice condition.  Also $p=\det A$, $q=\det B$.
\begin{defn}
A \emph{wavelet family} for the BMRA $\hrmra$ is a set
\begin{equation*}
\F
=\left\{\psi_1^A,\dotsc,\psi_{p-1}^A,\psi_1^B,\dotsc,
\psi_{q-1}^B,\psi_1,\dotsc,\psi_s\right\}
\end{equation*}
where $s=(p-1)(q-1)$, and $\{\psi_i^A\}$ (respectively
$\{\psi_j^B\}$, respectively $\{\psi_k\}$) is an orthonormal set
whose $\bZ^2$ translates form an orthonormal basis for
$V_{1,0}\ominus V_{0,0}$ (respectively $V_{0,1}\ominus V_{0,0}$,
respectively $W_{0,0}$).
\end{defn}
\begin{defn}\label{filterbank}
A \emph{filter bank} for the BMRA $\hrmra$ is a set of functions
$\tilde{\F}$ in $\Ltwod$,
\begin{equation*}
\tilde{\F}
 = \left\{\phi,\psi_1^A,\dotsc,\psi_{p-1}^A,\psi_1^B,\dotsc,
\psi_{q-1}^B, \psi_1,\dotsc,\psi_s\right\},
\end{equation*}
where the functions  $\psi_i^A\in V_{1,0}\ominus V_{0,0}$,
$\psi_j^B\in V_{0,1}\ominus V_{0,0}$, and $\psi_k \in W_{0,0}$ are
such that

 (i) the associated $(pq-1)\times(pq-1)$  filter
matrix $U_{\tilde{\F}}(\xi)$ for $T=AB$ is unitary almost
everywhere,

 (ii) the $p\times p$ (respectively $q\times q$) filter
matrix $U^A(\xi)$ (respectively $U^B(\xi)$) for the set
$\{\phi,\psi_1^A,\dotsc,\psi_{p-1}^A\}$ and $T=A$ (respectively\\
$\{\phi,\psi_1^B,\dotsc,\psi_{q-1}^B\}$ and $T=B$) is unitary
almost everywhere.
\end{defn}

If $\tilde{\F}$ is a filter bank as above then by (i) and Lemma
\ref{l:filtermatrix} the functions of $\tilde{\F}$ and their
$\bZ^d$ translates provide an orthonormal basis for $V_{1,1}$.
Also, by (ii) and Lemma \ref{l:filtermatrix} applied twice, the
set $ \{\phi,\psi_1^A,\dotsc,\psi_{p-1}^A\}$ (resp.
$\{\psi_1^B,\dotsc, \psi_{q-1}^B\}$) has translates forming an
orthonormal basis for $V_{1,0}$ (resp. $V_{0,1}$). It follows that
the set $\{\psi_1,\dotsc ,\psi_s\}$ has translates which form an
orthonormal basis of $W_{0,0}$ and so this set is a wavelet set.

 Although Theorem \ref{W:waveletthm} is stated as
 an existence theorem, the proof provides a recipe for construction
which we shall carry out in the next section.

\begin{thm}\label{W:waveletthm}
Let $\hrmra$ be a rank 2 MRA with respect to the commuting
dilation pair $(A,B)$.
Then there exists a wavelet set for $(A,B)$.
\end{thm}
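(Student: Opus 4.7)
The plan is to construct a filter bank $\tilde{\F}$ as in Definition \ref{filterbank} via a three-stage nested Gram--Schmidt process, and then invoke Lemma \ref{l:filtermatrix} to read off the wavelet set $\{\psi_1,\dots,\psi_s\}$ from its final $(p{-}1)(q{-}1)$ rows.

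First I would apply Lemma \ref{l:filtermatrix} to the scaling function $\phi$ viewed in the pair $V_{0,0}\subset V_{1,0}$. The row vector $(m_\phi^A(\xi+2\pi A^{-1}\Gamma_i))_{i=0}^{p-1}$ is of unit length almost everywhere, so by a measurable Gram--Schmidt completion (standard in the uniscaled MRA case) one can adjoin $p-1$ further $2\pi\bZ^d$-periodic rows producing a unitary function matrix $U^A(\xi)$. Reading the adjoined rows back through the formula \eqref{e:filter} yields functions $\psi_1^A,\dots,\psi_{p-1}^A\in V_{1,0}$; orthogonality of each new row to the first row translates (via the argument of Lemma \ref{l:filtermatrix}) into $\psi_i^A\perp\bZ^d\text{-translates of }\phi$, that is $\psi_i^A\in V_{1,0}\ominus V_{0,0}$. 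Performing the analogous construction for the pair $V_{0,0}\subset V_{0,1}$ with respect to $T=B$ produces $\psi_1^B,\dots,\psi_{q-1}^B\in V_{0,1}\ominus V_{0,0}$ and a unitary $q\times q$ matrix $U^B(\xi)$.

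Next I would compute the filter functions of $\phi$, $\psi_i^A$ and $\psi_j^B$ with respect to $T=AB$. Since $\psi_i^A\in V_{1,0}\subset V_{1,1}$, chaining \eqref{e:filter} with the scaling identity \eqref{scalingb} gives $m^{AB}_{\psi_i^A}(\xi)=m^A_{\psi_i^A}(B\xi)\,m^B_\phi(\xi)$, and symmetrically $m^{AB}_{\psi_j^B}(\xi)=m^B_{\psi_j^B}(A\xi)\,m^A_\phi(\xi)$, while $m^{AB}_\phi(\xi)=m^A_\phi(B\xi)m^B_\phi(\xi)$. Assembling the $(p{+}q{-}1)$ resulting rows (indexed by the $pq$ cosets of $AB\bZ^d$ in $\bZ^d$) produces a $(p{+}q{-}1)\times pq$ filter matrix. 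By the commuting lattice hypothesis (v), the subspaces $V_{0,0}$, $V_{1,0}\ominus V_{0,0}$ and $V_{0,1}\ominus V_{0,0}$ are mutually orthogonal, and $\bZ^d$-translates within each of them are orthonormal; then the converse direction of Lemma \ref{l:filtermatrix} (applied with $T=AB$) forces these $p+q-1$ rows to be orthonormal vectors in $\bC^{pq}$ almost everywhere.

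Finally I would perform a third measurable Gram--Schmidt completion, adjoining $s=pq-(p+q-1)=(p-1)(q-1)$ further $2\pi\bZ^d$-periodic rows to obtain a unitary $pq\times pq$ matrix $U_{\tilde{\F}}(\xi)$. Each new row, together with $\hat\phi((AB)^{-1}\xi)$, defines via \eqref{e:filter} a function $\psi_k\in V_{1,1}$. Row-orthogonality in $U_{\tilde{\F}}$ translates into $\psi_k$ being orthogonal to all $\bZ^d$-translates of $\phi,\psi_i^A,\psi_j^B$, hence orthogonal to the ONBs of $V_{1,0}$ and $V_{0,1}$; therefore $\psi_k\in V_{1,1}\ominus(V_{1,0}+V_{0,1})=W_{0,0}$. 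Unitarity of $U_{\tilde{\F}}(\xi)$ now lets Lemma \ref{l:filtermatrix} run in the forward direction, yielding that $\{\phi,\psi_i^A,\psi_j^B,\psi_k\}$ has $\bZ^d$-translates forming an orthonormal basis of $V_{1,1}$; subtracting off the bases of $V_{0,0}$, $V_{1,0}\ominus V_{0,0}$ and $V_{0,1}\ominus V_{0,0}$ leaves $\{\psi_1,\dots,\psi_s\}$ as an ONB of $W_{0,0}$ under translation. Applying $D_A^m D_B^n$ and using $D_A^m D_B^n W_{0,0}=W_{m,n}$ together with the Hilbert-space decomposition $\Ltwod=\bigoplus_{(m,n)\in\bZ^2}W_{m,n}$ shows $\{\psi_1,\dots,\psi_s\}$ is a wavelet set in the sense of Definition \ref{X:waveletset}.

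The main obstacle is the third step: verifying that the $(p{+}q{-}1)$ rows associated to $\phi,\psi_i^A,\psi_j^B$ really are orthonormal in $\bC^{pq}$ a.e.\ (so that Gram--Schmidt completion is available) and, symmetrically, that after completion the resulting $\psi_k$ genuinely land in $W_{0,0}$ rather than merely in $V_{1,1}$. Both facts hinge essentially on the commuting projection condition (v) of Definition \ref{def:bmra}, which is precisely what decouples the three orthogonal summands of $V_{1,0}+V_{0,1}$ inside $V_{1,1}$ and converts the monoscaled Gram--Schmidt recipe into a valid biscaled one; the measurability of the completion is standard and poses no additional difficulty.
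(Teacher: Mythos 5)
Your proposal is correct and follows essentially the same route as the paper: a nested sequence of measurable Gram--Schmidt completions (first for $T=A$ and $T=B$ separately, then for $T=AB$), with Lemma \ref{l:filtermatrix} invoked in both directions at each stage and the commuting lattice axiom supplying the orthogonality of $V_{1,0}\ominus V_{0,0}$ and $V_{0,1}\ominus V_{0,0}$ needed to make the $(p{+}q{-}1)\times pq$ matrix a partial isometry. The only difference is one of exposition: you spell out the chained filter identities $m^{AB}_{\psi_i^A}(\xi)=m^A_{\psi_i^A}(B\xi)m^B_\phi(\xi)$ inside the proof, whereas the paper defers that computation to its later discussion of the dyadic case.
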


\begin{proof}
From the preceding discussion it suffices to construct a filter
bank $\tilde{\F} = \{\phi\} \cup {\F}$.

From $\phi$ and the dilation $T = A$ construct the row vector
valued function of $\xi$ given by the normalised row of translated
functions for $m_\phi^A(\xi)$. This has the form
\[
[m_\phi^A(\xi) ~~~ m_\phi^A(\xi+2\pi A^{-1}d_1) \cdots
m_\phi^A(\xi+2\pi A^{-1}d_{p-1})]
\]
where $d_1,\dots , d_{p-1}$ in $\bZ^p \times \{0\}$, together with
$d_0 = 0$, give a set of digits for $A$. Precisely as in the
monoscaled theory we may apply the Gram-Schmidt process to any
full rank $p \times p$ completion of this row (by rows which are
similarly translates of their first entry) to obtain a $p\times p$
unitary matrix-valued function $U(\xi)$. We thus obtain $U(\xi) =
U_\G(\xi)$ for a family $\G = \{\phi,
\psi_1^A,\dotsc,\psi_{p-1}^A\}$ where each $\psi^A_i$ lies in
$V_{1,0} \ominus V_{0,0}$. By Lemma \ref{l:filtermatrix}, these
functions
 are orthonormal, with translates forming an orthonormal
 basis of $V_{1,0} \ominus V_{0,0}$.

In a similar way, using  $\phi,$ the dilation $ T=B$ and a set of
digits in $\{0\}\times \bZ^q$ for $B$, construct a unitary matrix
$U_\K(\xi)$ and orthonormal set  $\K =
\{\phi,\psi_1^B,\dotsc,\psi_{q-1}^B\}$ for which the functions
$\psi^B_j$ have translates forming an orthonormal basis for
$V_{0,1}\ominus V_{0,0}$.

Consider now the union,
\[
\F_{AB} =
\{\phi,\psi_1^A,\dotsc,\psi_{p-1}^A,\psi_1^B,\dotsc,\psi_{q-1}^B\}.
\]
Since $\fV$ is a lattice the $\psi^A_i$ and the $\psi^B_j$ are
orthogonal. Moreover elements of $\F_{AB}$ have
 orthonormal translates in
$V_{1,1}\ominus V_{0,0}$. It thus follows from Lemma
\ref{l:filtermatrix} again that for $T=AB$ the  $(p+q-1)\times pq$
 filter matrix for $\F_{AB}$ and $T$, denoted $U_{\F_{AB}}(\xi)$, is a
partial isometry almost everywhere.

As before we may complete $U_{\F_{AB}}(\xi)$  to a unitary $pq
\times pq$ matrix which is the filter matrix of a family $\F_{AB}
\cup \{\psi_1, \dots ,\psi_s\}.$ This is the desired filter bank
and, by Lemma \ref{l:filtermatrix}, yet again, $\{\psi_1, \dots
,\psi_s\}$ is the desired wavelet set.
\end{proof}

\section{Latin Square Wavelets.}

\subsection{Coset Filter Matrices.}
First we introduce a companion matrix $U'_{\F}(\xi)$ for
the filter matrix $U_\F(\xi)$ determined by functions
$\{f_0,\dotsc,f_{t-1}\}$ in $V_{i,j}$ and a fixed dilation unitary
$T$, as given in Definition \ref{d:cosetmatrix}.  This companion
$t\times t$ matrix uses cosets rather than translates and is
unitary if and only if $U_{\F}(\xi)$ is unitary. Furthermore,  and
it takes a particularly simple form in the case of the Latin
Square wavelets. We were unable to find a reference for this
equivalence and so give the detail here.

 Let $T$ be a (possibly weak) dilation matrix, let $t=\det T$ and let
$E_0,\dotsc,E_{t-1}$ be the cosets of $T\bZ^d$. Let
$d_0,\dotsc,d_{t-1}$ be representative digits, with $d_0=0$ and
$E_0=T\bZ^2$. We are interested in the case $T=A^iB^j$ with spaces
$V_{0,0}$ and $V_{i,j}$. For a function $f$ in $V_{i,j}$ we have
the coefficients $c_f(k)$ for $f$ and $T$ as before, determining
the filter function
\begin{equation*}
m_f(\xi)=\frac{1}{t}\sum_{k\in\bZ^d}c_f(k)e^{-i\langle\xi,k\rangle}.
\end{equation*}
Recall that the matrix $U_{\F}(\xi)$ is determined by its first
column which consists of the filter functions $m_{f_l}(\xi), 0 \le
l \le t-1$. The rows are formed by the translates
$m_{f_l}^i(\xi)$, $1\leq i \leq t-1$. Consider the coset sum
\begin{align*}
m_{f_{l,p}}(\xi)&=\frac{1}{t}\sum_{j\in E_p} c_{f_l}(j)e^{-i\langle\xi,j\rangle}\\
&=\left(\frac{1}{\sqrt{t}}\sum_{k\in\bZ^2}c_{f_l}(d_p+Tk)
e^{-i\langle\xi,Tk\rangle}\right)\frac{1}{\sqrt{t}}e^{-i\langle\xi,d_p
\rangle}\\
&=\mu_{f_{l,p}}(\xi)\frac{1}{\sqrt{t}}e^{-i\langle\xi,d_p\rangle},
\end{align*}
where $\mu_{f_{l,p}}(\xi)$ is defined as the bracketed sum.  Then
\begin{align*}
m_{f_{l,p}}^i(\xi)&=m_{f_{l,p}}(\xi+2\pi T^{-1}d_i)\\
&=\mu_{f_{l,p}}(\xi+2\pi T^{-1}d_i)\frac{1}{\sqrt{t}}
e^{-i\langle \xi+2\pi T^{-1}d_i,d_p\rangle}\\
&=\mu_{f_{l,p}}(\xi)D_{p,i}(\xi),
\end{align*}
by the $T^{-1}\bZ^2$ periodicity of $\mu_{f_{l,p}}(\xi)$, where
\begin{equation*}
D_{j,k}(\xi)=e^{i{\langle\xi,d_j\rangle}}\frac{1}{\sqrt{t}}e^{-i\langle2\pi
T^{-1}d_k,d_j}\rangle .
\end{equation*}
Note that $D=(D_{j,k})$ is a unitary valued matrix and
\begin{align*}
m^i_{f_l} =
\sum_{p=0}^{t-1}m^{i}_{f_{{l,p}}}=\sum_{p=0}^{t-1}\mu^{i}_{f_{{l,p}}}D_{p,i}
= (U_\F'D)_{l,i}.
\end{align*}
 Thus
\begin{equation*}
U_{\F}=U'_{\F}D,
\end{equation*}
where $U'_{\F}$, which we call the \emph{coset filter matrix} for
$\F$ and $T$, is defined by
\begin{equation*}
U'_{\F}(\xi)=\left(\mu_{f_{l,p}}(\xi)\right)_{l=1,p=0}^{t,t-1}.
\end{equation*}

\subsection{Latin Square Wavelets}
Let $A=\left[\begin{smallmatrix}3 &0\\0&
1\end{smallmatrix}\right],$ $B=\left[
\begin{smallmatrix}1&0\\0&3 \end{smallmatrix}\right]$ be (weak) dilation matrices
providing a commuting dilation pair $(A,B)$ and unitary dilation
operators $D_A,D_B$ on $\Ltwotwo$.  Let $\phi$ be the
characteristic function of the unit square $[0,1]^2$ in $\bR^2$.
Then $\phi$ and $(A,B)$ generate a BMRA $\hrmra$ in $\Ltwotwo$. In
fact $\V$ is simply the tensor product BMRA of two copies of the
triadic Haar wavelet MRA.  We have
\begin{equation*}
\phi((AB)^{-1}x)=\sum_{i=0}^2\sum_{j=0}^2\phi(x-(i,j)).
\end{equation*}
The distinctiveness of this scaling relation is that
$\phi((AB)^{-1}x)$ is simply a linear combination of translates of
$\phi$ by a set of digits for $T=AB$.  This property, as we shall
see, persists in the wavelets that we construct for $\hrmra$ by
filter matrix completion method of Theorem \ref{W:waveletthm}.  We
have
\begin{align*}
\phi(A^{-1}x)&=\phi(x)+\phi(x_1-1,x_2)+\phi(x_1-2,x_2)
\intertext{and so}
m_{\phi}^{A}(\xi_1,\xi_2)&=\frac{1}{3}(1+e^{-i\xi_1}+e^{-i2\xi_1}).
\end{align*}
Let $(0,0),(1,0),(2,0)$ be the natural set of digits for $A$. Then
the three coset functions for $m_{\phi}^A(\xi)$ are simply the
pure frequency functions
$\tfrac{1}{\sqrt{3}},\tfrac{1}{\sqrt{3}}e^{-i\xi_1},
\tfrac{1}{\sqrt{3}}e^{-i2\xi_1}$.
 To complete the row
\[\frac{1}{\sqrt{3}}\left[\begin{matrix}1&e^{-i\xi_1}&e^{-i2\xi_1}\\
\end{matrix}\right]\]
to a unitary matrix valued function of coset functions we first
complete
\[\frac{1}{\sqrt{3}}\left[\begin{matrix}1&1&1\\ \end{matrix}\right]\]
to a $3\times 3$ unitary scalar matrix.  One such completion gives
the desired completion
\begin{equation*}
U_A'(\xi_1,\xi_2)=\left[
\begin{matrix}\tfrac{1}{\sqrt{3}}&\tfrac{1}{\sqrt{3}}&\tfrac{1}{\sqrt{3}}\\\tfrac{1}{\sqrt{6}}&\tfrac{1}{\sqrt{6}}&\tfrac{-2}{\sqrt{6}}\\
\tfrac{1}{\sqrt{2}}& -\tfrac{1}{\sqrt{2}}&0 \end{matrix}
\right]M_A(\xi)
\end{equation*}
where $M_A(\xi)=\textrm{diag}(1,e^{-i\xi_1}e^{-i2\xi_1})$.  A
similar completion matrix $U'_B(\xi_1,\xi_2)$, with
$M_B(\xi)=\textrm{diag}(1,e^{-i\xi_2}e^{-i2\xi_2})$, is associated
with
$$m_\phi^B(\xi_1,\xi_2) =\frac{1}{3}(1+e^{-i\xi_1}+e^{-i2\xi_1})$$
and the digits $(0,0),(0,1),(0,2)$.

 Rows $2,3$ of the completions
above provide functions $\psi_1^A,\psi_2^A,\psi_1^B,\psi_2^B$ such
that the five functions
$\{\phi,\psi_1^A,\psi_2^A,\psi_1^B,\psi_2^B\}$ are part of a
wavelet family in the sense of Definition \ref{filterbank}.
Moreover, for this orthonormal set the coset functions for digits
for $T=AB$ provide a partial isometry

\begin{tiny}\[ \left[ \begin {array}{ccccccccc}
\tfrac{1}{3}&\tfrac{1}{3}&\tfrac{1}{3}&\tfrac{1}{3}&\tfrac{1}{3}&\tfrac{1}{3}&\tfrac{1}{3}&\tfrac{1}{3}&\tfrac{1}{3}\\\noalign{\medskip}\tfrac{1}{6}\,\sqrt
{2}&\tfrac{1}{6}\,\sqrt {2}&\tfrac{1}{6}\,\sqrt
{2}&\tfrac{1}{6}\,\sqrt {2}&\tfrac{1}{6}\,\sqrt
{2}&\tfrac{1}{6}\,\sqrt {2}&-\tfrac{1}{3}\,\sqrt
{2}&-\tfrac{1}{3}\,\sqrt {2}&-\tfrac{1}{3}\,\sqrt
{2}\\\noalign{\medskip}\tfrac{1}{6}\,\sqrt {6}&\tfrac{1}{6}\,\sqrt
{6}&\tfrac{1}{6}\,\sqrt {6}&-\tfrac{1}{6}\,\sqrt
{6}&-\tfrac{1}{6}\,\sqrt {6}&-\tfrac{1}{6}\,\sqrt
{6}&0&0&0\\\noalign{\medskip}\tfrac{1}{6}\,\sqrt
{2}&\tfrac{1}{6}\,\sqrt {2}&-\tfrac{1}{3}\,\sqrt
{2}&\tfrac{1}{6}\,\sqrt {2}&\tfrac{1}{6}\,\sqrt
{2}&-\tfrac{1}{3}\,\sqrt {2}&\tfrac{1}{6}\,\sqrt
{2}&\tfrac{1}{6}\,\sqrt {2}&-\tfrac{1}{3}\,\sqrt
{2}\\\noalign{\medskip}\tfrac{1}{6}\,\sqrt
{6}&-\tfrac{1}{6}\,\sqrt {6}&0&\tfrac{1}{6}\,\sqrt
{6}&-\tfrac{1}{6}\,\sqrt {6}&0&\tfrac{1}{6}\,\sqrt
{6}&-\tfrac{1}{6}\,\sqrt
{6}&0\\\noalign{\medskip}\end{array}\right]\]
\end{tiny}

\noindent where we have ordered the columns according to the digit
order
\[(0,0),(0,1),(0,2),(1,0),(1,1),(1,2),(2,0),(2,1),(2,2).\]

We may now complete to a $9 \times 9$ unitary matrix,

\begin{tiny}\[ \left[ \begin {array}{ccccccccc}
\tfrac{1}{3}&\tfrac{1}{3}&\tfrac{1}{3}&\tfrac{1}{3}&\tfrac{1}{3}&
\tfrac{1}{3}&\tfrac{1}{3}&\tfrac{1}{3}&\tfrac{1}{3}\\\noalign{\medskip}\tfrac{\sqrt{2}}{6}&\tfrac{\sqrt{2}}{6}&\tfrac{\sqrt{2}}{6}&\tfrac{\sqrt{2}}{6}&\tfrac{\sqrt{2}}{6}&\tfrac{\sqrt{2}}{6}&-\tfrac{\sqrt{2}}{3}&-\tfrac{\sqrt{2}}{3}&-\tfrac{\sqrt{2}}{3}\\\noalign{\medskip}\tfrac{\sqrt{6}}{6}&\tfrac{\sqrt{6}}{6}&\tfrac{\sqrt{6}}{6}&-\tfrac{\sqrt{6}}{6}&-\tfrac{\sqrt{6}}{6}&-\tfrac{\sqrt{6}}{6}&0&0&0\\\noalign{\medskip}\tfrac{\sqrt{2}}{6}&\tfrac{\sqrt{2}}{6}&-\tfrac{\sqrt{2}}{3}&\tfrac{\sqrt{2}}{6}&\tfrac{\sqrt{2}}{6}&-\tfrac{\sqrt{2}}{3}&\tfrac{\sqrt{2}}{6}&\tfrac{\sqrt{2}}{6}&-\tfrac{\sqrt{2}}{3}\\\noalign{\medskip}\tfrac{\sqrt{6}}{6}&-\tfrac{\sqrt{6}}{6}&0&\tfrac{\sqrt{6}}{6}&-\tfrac{\sqrt{6}}{6}&0&\tfrac{\sqrt{6}}{6}&-\tfrac{\sqrt{6}}{6}&0\\\noalign{\medskip}\tfrac{\sqrt{10}}{6}&-\tfrac{\sqrt{10}}{30}&-\tfrac{2\sqrt{10}}{15}&-\tfrac{2\sqrt{10}}{15}&-\tfrac{\sqrt{10}}{30}&\tfrac{\sqrt{10}}{6}&-\tfrac{\sqrt{10}}{30}&\tfrac{\sqrt{10}}{15}&-\tfrac{\sqrt{10}}{30}\\\noalign{\medskip}0&-\tfrac{\sqrt{15}}{30}&\tfrac{\sqrt{15}}{30}&-\tfrac{2\sqrt{15}}{15}&\tfrac{2\sqrt{15}}{15}&0&\tfrac{2\sqrt{15}}{15}&-\tfrac{\sqrt{15}}{10}&-\tfrac{\sqrt{15}}{30}\\\noalign{\medskip}\tfrac{\sqrt{15}}{15}&-\tfrac{\sqrt{15}}{6}&\tfrac{\sqrt{15}}{10}&0&\tfrac{\sqrt{15}}{15}&-\tfrac{\sqrt{15}}{15}&-\tfrac{\sqrt{15}}{15}&\tfrac{\sqrt{15}}{10}&-\tfrac{\sqrt{15}}{30}\\\noalign{\medskip}\tfrac{\sqrt{10}}{10}&0&-\tfrac{\sqrt{10}}{10}&0&\tfrac{\sqrt{10}}{10}&-\tfrac{\sqrt{10}}{10}&-\tfrac{\sqrt{10}}{10}&-\tfrac{\sqrt{10}}{10}&\tfrac{\sqrt{10}}{5}\end
{array} \right] \]\end{tiny}

Just as the dilated scaling  function $\phi(A^{-1}B^{-1}\xi)$ is a
linear combination of digit translates of $\phi$, so too are the
wavelets, $\psi_1,\psi_2,\psi_3,\psi_4$ which are determined by
the last four rows of the completion. We can confirm and
understand the orthogonality of these wavelets by arranging the
coefficients of
$\tfrac{30}{\sqrt{10}}\psi_1,\tfrac{30}{\sqrt{15}}\psi_2,
\tfrac{30}{\sqrt{15}}\psi_3,\tfrac{10}{\sqrt{10}}\psi_4$ as in the
diagram. One readily sees that the construction creates in this
way a quadruple of latin squares which are pairwise orthogonal,
that is, have vanishing inner products. Such constructs are
natural to study in their own right, and indeed may be used to
provide wavelets which, as here, are entirely natural variants of
Haar wavelets.

In summary, the arguments above have led to the following theorem
where  $\chi_{ij}(x,y)$ denotes the characteristic function of the
set $[0,1/3]^2+(i/3,j/3)$.

\begin{thm}\label{t:latinsquare}
Let $\psi_1, \dots , \psi_4$ be the functions on $\bR^2$ given by
\begin{align*}\psi_1 &=
\frac{\sqrt{10}}{30}(5\chi_{00}-\chi_{01}-4\chi_{02}-4\chi_{10}
-\chi_{11}+5\chi_{12}-\chi_{20}+2\chi_{21}-\chi_{22}),
\\
\psi_2 &= \frac{\sqrt{15}}{30}(-\chi_{01}+\chi_{02}-4\chi_{10}
+4\chi_{11}+4\chi_{20}-3\chi_{21}-\chi_{22}),
\\
\psi_3 &= \frac{\sqrt{10}}{30}(2\chi_{00}-5\chi_{01}+3\chi_{02}
+2\chi_{11}-2\chi_{12}-2\chi_{20}+3\chi_{21}-\chi_{22}),
\\
\psi_4 &= \frac{\sqrt{10}}{10}(\chi_{00}-\chi_{02}+\chi_{11}
-\chi_{12}-\chi_{20}-\chi_{21}+2\chi_{22}).
\end{align*}
Then the set $\{\psi_1,\dotsc,\psi_s\}$ is a bidyadic wavelet set.
That is, the set
\begin{equation*}
\left\{3^{(n+m)/2}\psi_i(3^mx+k_1,3^ny+k_2):(m,n), (k_1,k_2) \in
\bZ^2,1\leq i\leq 4\right\}
\end{equation*}
is an orthonormal basis in $L^2(\bR^2)$.
\end{thm}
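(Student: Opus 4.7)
The plan is to identify the four functions $\psi_1,\dotsc,\psi_4$ as those produced by the filter-bank construction in the proof of Theorem \ref{W:waveletthm}, applied to the specific BMRA $(\phi,\fV)$ with $\phi=\chi_{[0,1]^2}$ and the pair $(A,B)=(\diag(3,1),\diag(1,3))$. Once this identification is made, the orthonormality and basis property follow automatically from the discussion following Definition \ref{filterbank} together with Lemma \ref{l:filtermatrix}.

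First I would translate the coefficient expansions given in the statement into filter-bank data. Because $T=AB=3I_2$ has determinant $9$ and $\phi(T^{-1}(x-(i,j)))=\chi_{ij}(x)$, any function $\psi\in V_{1,1}$ of the form $\psi=\sum_{i,j=0}^{2}c_{ij}\,\chi_{ij}$ satisfies $\psi\circ T^{-1}=\sum c_{ij}\phi(\,\cdot-(i,j))$, and hence has constant coset filter entries $\mu_{\psi_{i,j}}=\tfrac13 c_{ij}$ with respect to the digit set $\{(i,j):0\le i,j\le 2\}$. Thus the coset filter matrix for $\{\phi,\psi_1^A,\psi_2^A,\psi_1^B,\psi_2^B,\psi_1,\psi_2,\psi_3,\psi_4\}$ and $T=AB$ is a \emph{constant} $9\times 9$ scalar matrix whose rows are (up to a factor of $\tfrac13$) the coefficient vectors $(c_{ij})$ of the nine functions. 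The coefficients in the statement are chosen precisely so that this scalar matrix coincides with the $9\times 9$ unitary matrix displayed in the preceding discussion (with columns ordered by the digit order $(0,0),(0,1),(0,2),(1,0),\dotsc,(2,2)$), after the overall normalisations $\tfrac{\sqrt{10}}{30},\tfrac{\sqrt{15}}{30},\tfrac{\sqrt{10}}{30},\tfrac{\sqrt{10}}{10}$ are absorbed.

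Next I would invoke the equivalence $U_{\F}=U'_{\F}D$ from Subsection 4.1: since $D$ is unitary, the unitarity of the constant coset filter matrix is equivalent to the unitarity of the actual filter matrix $U_{\tilde{\F}}(\xi)$ at almost every $\xi$. The unitarity of the displayed $9\times 9$ scalar matrix can be checked directly: its first five rows are the ones already verified to form a partial isometry (providing a filter bank for $\phi,\psi_1^A,\psi_2^A,\psi_1^B,\psi_2^B$), and the remaining four rows are pairwise orthogonal unit vectors orthogonal to the first five, by a finite inner-product calculation with the listed rational entries. By Definition \ref{filterbank} and the paragraph following it, $\{\psi_1,\dotsc,\psi_4\}$ is then a wavelet set, which is exactly the conclusion of the theorem.

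The only real work is the combinatorial verification that the four coefficient arrays for $\psi_1,\psi_2,\psi_3,\psi_4$ are (after normalisation) mutually orthogonal vectors in $\bR^9$ that are also orthogonal to the constant vector and to the four vectors encoding $\psi_1^A,\psi_2^A,\psi_1^B,\psi_2^B$. The authors have already arranged the coefficients so that the four $3\times 3$ arrays form a quadruple of pairwise orthogonal \emph{Latin squares}, and once this is observed the orthogonality to the row-constant ($A$-type) and column-constant ($B$-type) vectors is immediate, since each Latin-square row and column sums to zero. I expect this bookkeeping — confirming the row sums, column sums, and the six pairwise inner products of the four wavelet rows — to be the only non-trivial step, and it is a routine finite calculation rather than a conceptual obstacle.
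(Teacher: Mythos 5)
Your proposal is correct and follows essentially the same route as the paper: the paper's ``proof'' of Theorem \ref{t:latinsquare} is precisely the preceding discussion in Section 4, which uses the coset filter matrix equivalence $U_{\F}=U'_{\F}D$ to reduce everything to the unitarity of the displayed constant $9\times 9$ scalar matrix, completes the $5\times 9$ partial isometry built from $\phi,\psi_1^A,\psi_2^A,\psi_1^B,\psi_2^B$, and then invokes Lemma \ref{l:filtermatrix} and the filter-bank discussion to conclude that the last four rows give a wavelet set. Your added observation that orthogonality to the $A$-type and $B$-type rows follows from the vanishing row and column sums of the Latin-square coefficient arrays is exactly the point the paper makes with Figure \ref{latinsquare}.
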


\begin{figure} [ht]
\input{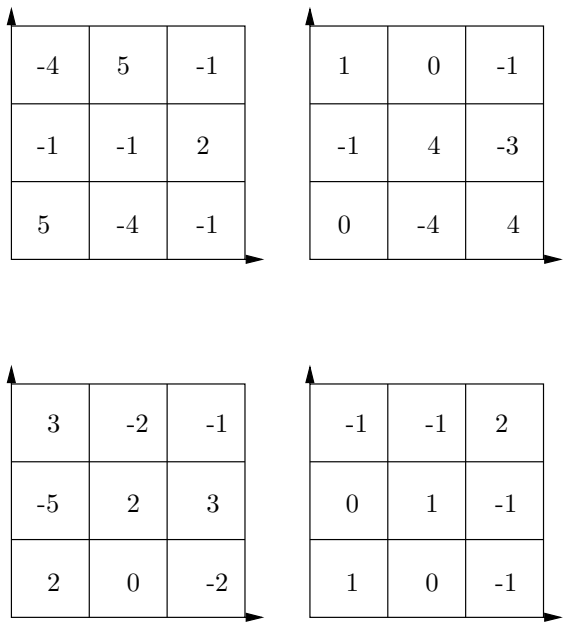}
\caption{Orthogonal Latin squares for the biscaled wavelets
$\psi_1, \dots ,\psi_4$. } \label{latinsquare}
\end{figure}

\section{Dyadic Biscaled Wavelets and Filter Formulae.}

In this section we derive the filter formula, as given in the
introduction, that corresponds to the commuting projection lattice
property of a BMRA.

We start by reproving the well-known fact that an MRA $(\phi,
\{V_i\})$ in $\Ltwo$ for the dyadic dilation matrix $A =[2]$ has
an essentially unique wavelet $\psi_0$. It is determined by the
necessary and sufficient condition that
\[
\hat{\psi}_0(\xi) = m_{\psi_0}(\xi /2)\hat{\phi}(\xi /2)
\]
where the filter $m_{\psi_0}$ for $\psi_0$ for the dilation $T=A$
is given by
\[
m_{\psi_0}= v(\xi)e^{-i\xi}\overline{m_\phi(\xi+\pi)} \] where
$v(\xi)$ is an arbitrary $2\pi$-periodic unimodular function in
$L^\infty (\bR)$.

To see this note that the row matrix function
\[
[m_{\phi}(\xi) \quad m_\phi(\xi +\pi)]
\]
determined by the scaling funciton $\phi$ has a $2 \times 2$
unitary matrix completion of the form
\begin{equation*}
\left[
\begin{matrix} m_\phi(\xi) & m_\phi(\xi+\pi)\\
m_{\psi_0}(\xi) & m_{\psi_0}(\xi + \pi)
\end{matrix} \right]
\end{equation*}
for every function $v(\xi)$ as above. Furthermore,  every unitary
completion in $M_2(L^2(2\pi \bT))$ necessarily has this form. It
follows from Lemma \ref{l:filtermatrix} that each such function
$\psi_0$ has orthonormal translates forming a basis for the
difference space $V_1 \ominus V_0$, and hence that $\psi_0$ is a
wavelet. Conversely, if $\psi'$ is a wavelet, then by Lemma
\ref{l:filtermatrix} $\{\phi, \psi'\}$ necessarily has unitary
filter matrix for $T=A$ and so $\psi'$ necessarily is of the same
form as $\psi_0$.

Consider now a dyadic biscaled wavelet $\psi$ by which we mean a
wavelet for a BMRA $(\phi, \fV)$ in $\Ltwo$ for the dilation
matrices
$$A=\begin{bmatrix}
2&0\\0&1\end{bmatrix},\quad  B=\begin{bmatrix}
1&0\\0&2\end{bmatrix}
$$
obtained from Theorem 3.6. More precisely, since $(\det A -1)(
\det B-1) = 1$ the proof of Theorem 3.6 shows that there exists a
wavelet set which is a singleton $\psi$ whose filter function
$m_\psi^{AB}(\xi)$ for $T = AB$ is associated with a filter bank
$\{\phi, \psi^A, \psi^B, \psi\}$.This filter, $m_\psi^{AB}(\xi)$,
arises from the step-wise unitary completion of the row matrix
function
\[
\left[
\begin{matrix}
 m_\phi^{AB}(\xi) & m_\phi^{AB}(\xi + \pi (1,0))
 & m_\phi^{AB}(\xi + \pi (0,1))& m_\phi^{AB}(\xi +\pi (1,1))
 \end{matrix}\right].
 \]

 By our previous remarks we can
 make explicit how $\psi^A(x)$ and $\psi^B(x)$ may be
 defined. Let us do this and recap the construction process for $\psi$.

Define a function $\psi^A$ by specifying its filter $
m^A_{\psi^A}(\xi) $ for $T=A$ to have the form
\[
m^A_{\psi^A}(\xi) = e^{-i\xi_1}\overline{m_\phi^A(\xi + \pi
(1,0))},
\]
and likewise define $\psi^B$ via its filter function for $T=B$
given by
\[
m^B_{\psi_B}(\xi) = e^{-i\xi_2}\overline{m_\phi^B(\xi + \pi
(0,1))}.
\]
As we have already observed above, the $2 \times 2$-translate
filter matrices for $\{\phi,\psi^A\}$ and $T=A$, and for
$\{\phi,\psi^B\}$ and $T=B$, are unitary almost everywhere. It
follows from Lemma \ref{l:filtermatrix} that $\psi^A$ (resp.
$\psi^B$) has orthonormal translates spanning $V_{1,0}\ominus
V_{0,0}$ (resp. $V_{0,1} \ominus V_{0,0})$. Since these difference
spaces are orthogonal, by the commuting lattice property of a BMRA
subspace grid, we obtain, via Lemma \ref{l:filtermatrix}, a $3
\times 4$ partial isometry valued filter matrix function
$U_\F(\xi)$ for $T=AB$. It suffices to complete this to a $4
\times 4$ unitary-valued filter matrix in order to obtain an
explicit filter function $m_\psi^{AB}$ which then determines the
desired wavelet $\psi$.

\subsection{The Commuting Lattice Filter Relation}
\label{s:commutinglattice} We now examine more directly
the orthogonality of the rows of the $3 \times 4$ filter matrix
for $T=AB$ and $\{\phi, \psi^A, \psi^B\}$.

For convenience we assume that the support of $\hat{\phi}$
contains $[-\pi,\pi]^2$.
 We are now regarding $\psi^A$ as a function in
$V_{1,1}$. This will necessarily have a  filter function
$m^{AB}_{\psi^A}(\xi)$ for $T= AB$ satisfying
\[
\hat{\psi}^A(AB\xi) = m^{AB}_{\psi^A}(\xi)\hat{\phi}(\xi).
\]
We have,
\[
\hat{\psi}^A(A(B\xi)) = m_{\psi^A}^A(B\xi)\hat{\phi}(B\xi)
\\
=m_{\psi^A}^A(B\xi)m_\phi^B(\xi)\hat{\phi}(\xi)
\]
and so, almost everywhere on the support of $\hat{\phi}$, we have
\[
m^{AB}_{\psi^A}(\xi)=m_{\psi^A}^A(B\xi)m_\phi^B(\xi)
\\
=
e^{-i\xi_1}\overline{m_\phi^A(\xi_1+\pi,2\xi_2)}m_\phi^B(\xi_1,\xi_2).
\]
By the support assumption and the $2\pi\bZ^2$ periodicity of the
filters the identitity holds almost everywhere. Similarly,
\[
m^{AB}_{\psi^B}(\xi) =
e^{-i\xi_2}\overline{m_\phi^B(2\xi_1,\xi_2+\pi)}m_\phi^A(\xi_1,\xi_2).
\]
To compactify notation we suppress $\xi_1$ and $\xi_2$ and set,
\[
A_{1,1}^{0,0} = m_\phi^A(\xi_1,\xi_2),\quad  A_{1,1}^{\pi,0} =
m_\phi^A(\xi_1+\pi,\xi_2),
\]
\[
A_{2,1}^{0,0} = m_\phi^A(2\xi_1,\xi_2),\quad  A_{2,1}^{\pi,0} =
m_\phi^A(2(\xi_1+\pi),\xi_2) = A_{2,1}^{0,0},
\]
\[
\overline{A}^{0,\pi}_{2,1} =
\overline{m_\phi^A(2\xi_1,\xi_2+\pi)},
\]
and so on, with $B^{0,0}_{1,1}, B^{\pi,0}_{1,1},\dots $ similarly
associated with $m_\phi^B(\xi_1,\xi_2)$. The $3 \times 4$ filter
matrix for $\{\phi, \psi^A, \psi^B\}$ is the product $DU$ of the
diagonal matrix $D = diag\{1,e^{-i\xi_1}, e^{-i\xi_2}\}$ and the
$3 \times 4$ matrix,
\[
U= \left[
\begin{matrix}
 A_{1,2}^{0,0}B_{1,1}^{0,0} &
 A_{1,2}^{\pi,0}B_{1,1}^{\pi,0}&
 A_{1,2}^{0,0}B_{1,1}^{0,\pi} &
 A_{1,2}^{\pi,0}B_{1,1}^{\pi,pi}\\
  & & & \\
\overline{A}_{1,2}^{\pi,0}B_{1,1}^{0,0}&
-\overline{A}_{1,2}^{0,0}B_{1,1}^{\pi,0}&
\overline{A}_{1,2}^{\pi,0}B_{1,1}^{0,\pi}
&-\overline{A}_{1,2}^{0,0}B_{1,1}^{\pi,\pi}\\
& & & \\
 A_{1,1}^{0,0}\overline{B}_{2,1}^{0,\pi}&
A_{1,1}^{\pi,0}\overline{B}_{2,1}^{0,\pi}&
-A_{1,1}^{0,\pi}\overline{B}_{2,1}^{0,0}&
-A_{1,1}^{\pi,\pi}\overline{B}_{2,1}^{0,0}
\end{matrix}
\right].
\]

The first row of $U$ can be written in the alternate form,
\[\left[
\begin{matrix}
A_{1,1}^{0,0}B_{2,1}^{0,0} &
A_{1,1}^{\pi,0}B_{2,1}^{0,0} &
A_{1,1}^{0,\pi}B_{2,1}^{0,\pi} &
A_{1,1}^{\pi,\pi}B_{2,1}^{0,\pi}
\end{matrix}\right]
\]
in view of the intertwining relations
\[
m_\phi^{AB}(\xi_1, \xi_2) = A_{1,2}^{0,0}B_{1,1,}^{0,0} =
A_{1,1}^{0,0}B_{2,1}^{0,0},\quad \mbox{etc}.
\]

We note that the unitarity almost everywhere of the filter
matrices for $\{\phi, \psi^A\}$ and $\{\phi, \psi^B\}$ implies
that almost everywhere
\[
|A^{0,0}_{1,1}|^2 + |A^{\pi,0}_{1,1}|^2 = 1, \quad
|B^{0,0}_{1,1}|^2 + |B^{0,\pi}_{1,1}|^2 = 1.
\]
Thus, the inner product of rows $1$ and $2$ of $U$ is
\[
A^{0,0}_{1,2}A^{\pi,0}_{1,2}|B^{0,0}_{1,1}|^2 -
A^{\pi,0}_{1,2}A^{0,0}_{1,2}|B^{\pi,0}_{1,1}|^2 +
A^{0,0}_{1,2}A^{\pi,0}_{1,2}|B^{0,\pi}_{1,1}|^2 -
A^{\pi,0}_{1,2}A^{0,0}_{1,2}|B^{\pi,\pi}_{1,1}|^2
\]
\[
=A^{0,0}_{1,2}A^{\pi,0}_{1,2}(|B^{0,0}_{1,1}|^2 +
|B^{0,\pi}_{1,1}|^2) -
A^{\pi,0}_{1,2}A^{0,0}_{1,2}(|B^{\pi,0}_{1,1}|^2 +
|B^{\pi,\pi}_{1,1}|^2) = 0.
\]

Likewise, using the alternative form for row $1$, the rows $1$ and
$3$ of $U$ are orthogonal.

\medskip

To recap, we have a $3 \times 4$ partial isometry translate filter
matrix arising from the $T=AB$ filters for $\phi, \psi^A, \psi^B$,
and
\medskip
\[
m^{AB}_{\psi^A}(\xi) = e^{-i\xi_1}\overline{m_\phi^A(\xi_1 +
\pi,2\xi_2)}m_\phi^B(\xi_1,\xi_2) =
e^{-i\xi_1}\overline{A}_{1,2}^{\pi,0}B_{1,1}^{0,0},
\]
\[
m^{AB}_{\psi^B}(\xi) =
e^{-i\xi_2}m_\phi^A(\xi_1,\xi_2)\overline{m_\phi^B(2\xi_1,\xi_2+\pi)}
=e^{-i\xi_2}{A}_{1,1}^{0,0}\overline{B}_{2,1}^{0,\pi}.
\]
\medskip

As we have remarked, $\psi^A$ and $\psi^B$ lie in $V_{1,0}\ominus
V_{0,0}$ and $V_{0,1}\ominus V_{0,0}$ respectively. This is a
consequence of the commuting lattice property of a BMRA and it is
for this reason that rows 3 and 4 are orthogonal almost
everywhere. Conversely this necessary condition (together with the
previous row orthogonality) is sufficient for the commuting
lattice property. Indeed, the orthogonality of $V_{1,0}\ominus
V_{0,0}$ and $V_{0,1}\ominus V_{0,0}$  follows from this (via
Lemma 2.7 yet again) and the commuting projection lattice property
follows. The row orthogonality is the following formula, which is
written in expanded form in the introduction.
\medskip

\begin{align*}
A_{1,2}^{\pi,0}A_{1,1}^{0,0}\overline{B_{1,1}^{0,0}B_{2,1}^{0,\pi}}
-
A_{1,2}^{0,0}A_{1,1}^{\pi,0}\overline{B_{1,1}^{\pi,0}B_{2,1}^{0,\pi}}
- &\\
A_{1,2}^{\pi,0}A_{1,1}^{0,\pi}\overline{B_{1,1}^{0,\pi}B_{2,1}^{0,0}}
+
A_{1,2}^{0,0}A_{1,1}^{\pi,\pi}\overline{B_{1,1}^{\pi,\pi}B_{2,1}^{0,0}}&=0
\end{align*}
\medskip

\begin{rem} We have shown that for a dyadic BMRA scaling function $\phi$
there are two necessary conditions on the "marginal" filters
$m_\phi^A(\xi)$ and $m_\phi^B(\xi)$, namely the intertwining
condition and the commuting lattice (or orthogonality) condition
above. In the final section we shall construct a  function $\phi$
which satisfies these requirements and which defines a
nonseparable BMRA. Evidently this nonseparable scaling function
construction, \textit{ab initio}, is considerably more complicated
than that of constructing nonseparable wavelets, such as the LAtin
square wavelets, from a given (possibly separable) BMRA.
\end{rem}

\begin{rem}
It would be interesting to pursue a "Riesz theory" of general
not-necessarily-commuting lattices associated with scaling
functions with, perhaps, Riesz basis translates. Going somewhat in
this direction, we remark that the following is true (and the
proof is rather delicate). For a "noncommuting-BMRA", that is a
pair $(\phi, \V)$ satisfying all the axioms for a BMRA except the
commuting lattice axiom (v), the subspace grid is necessarily a
lattice.
\end{rem}

\section{Impossibility of compact support for
nonseparable BMRA scaling functions} In this section we show that
if the scaling function of a BMRA $(\phi , \fV)$ in $\Ltwo$ is
compactly supported then $\phi(x,y)$ is separable.

\begin{lem}\label{trig} Let $a(\xi_1,\xi_2),b(\xi_1,\xi_2)$ be
non-zero trigonometric polynomials with frequencies in $\bZ^2$.
Suppose that $\alpha, \beta \geq 2$ are integers and for all
$\xi_1 , \xi_2$,
\begin{equation}
a(\xi_1,\beta\xi_2)b(\xi_1,\xi_2)=a(\xi_1,\xi_2)b(\alpha
\xi_1,\xi_2).
\end{equation}
Then $a(\xi_1,\xi_2)=a(\xi_1)$ and $b(\xi_1,\xi_2)=b(\xi_2)$ for
some single variable trigonometric polynomials
$a(\xi_1),b(\xi_2)$.
\end{lem}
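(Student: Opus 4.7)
The plan is to pass from trigonometric polynomials to Laurent polynomials via the substitution $z = e^{i\xi_1}$, $w = e^{i\xi_2}$, and then exploit the fact that the Laurent polynomial ring $\bC[z^{\pm 1}, w^{\pm 1}]$ is an integral domain so that ``degrees add'' under multiplication. Writing $a$ and $b$ as nonzero Laurent polynomials $a(z,w), b(z,w)$, the hypothesis becomes the algebraic identity
\[
a(z, w^\beta)\, b(z, w) \;=\; a(z, w)\, b(z^\alpha, w)
\]
in $\bC[z^{\pm 1}, w^{\pm 1}]$.

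For a nonzero Laurent polynomial $f$, let $d_+^w(f)$ and $d_-^w(f)$ denote the largest and smallest $w$-exponents appearing with nonzero coefficient, and define $d_\pm^z(f)$ analogously. Since $\bC[z^{\pm 1}, w^{\pm 1}]$ is an integral domain, the leading (resp.\ trailing) coefficient in $w$ of a product is the product of the leading (resp.\ trailing) coefficients, so $d_\pm^w(fg) = d_\pm^w(f) + d_\pm^w(g)$, and similarly for $z$. Observe that the substitution $w \mapsto w^\beta$ sends $d_\pm^w(a)$ to $\beta\, d_\pm^w(a)$, while $z \mapsto z^\alpha$ leaves $d_\pm^w$ unchanged. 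Comparing highest and lowest $w$-exponents on the two sides of the identity gives
\[
\beta\, d_+^w(a) + d_+^w(b) \;=\; d_+^w(a) + d_+^w(b), \qquad \beta\, d_-^w(a) + d_-^w(b) \;=\; d_-^w(a) + d_-^w(b),
\]
hence $(\beta-1)\,d_\pm^w(a) = 0$. Since $\beta \geq 2$ this forces $d_+^w(a) = d_-^w(a) = 0$, so $a$ is independent of $w$.

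By the symmetric argument applied to $z$-exponents (using that $w \mapsto w^\beta$ leaves $d_\pm^z$ unchanged while $z \mapsto z^\alpha$ multiplies them by $\alpha$), one obtains $(\alpha - 1)\, d_\pm^z(b) = 0$, and hence $d_\pm^z(b) = 0$, so $b$ is independent of $z$. Translating back via $z = e^{i\xi_1}, w = e^{i\xi_2}$ yields $a(\xi_1,\xi_2) = a(\xi_1)$ and $b(\xi_1,\xi_2) = b(\xi_2)$, as required. The argument is entirely formal; the only subtlety is the appeal to the integral domain property, which guarantees that leading and trailing coefficients do not cancel in the product, so no step is a real obstacle.
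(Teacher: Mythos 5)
Your proof is correct. It is, at heart, the same argument as the paper's --- both proofs compare extremal frequencies on the two sides of the identity --- but your packaging is genuinely cleaner. The paper works directly with Fourier coefficients: it isolates the coefficient of frequency $(p,\beta M_2^a+M_2^b)$, observes that on one side this forces $k=M_2^a$ while on the other side it vanishes unless $M_2^a=0$, and then runs a contradiction argument with the auxiliary indices $z_a,z_b$ whose sole purpose is to verify that the relevant convolution sum reduces to the single nonzero product $a_{z_a,M_2^a}b_{z_b,M_2^b}$. That last step is precisely the statement that the leading coefficient (in $w$) of a product of Laurent polynomials is the product of the leading coefficients, which you obtain in one line from the integral domain property of $\bC[z^{\pm1},w^{\pm1}]$. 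Your version also handles the four extremal exponents ($d_\pm^w(a)$ and $d_\pm^z(b)$) uniformly, whereas the paper treats the maximum, then sketches the minimum case ("an analogous argument... gives $L_2=0$") and the role-exchange for $b$ separately. The one point worth stating explicitly in your write-up is the verification that $d_\pm^w(fg)=d_\pm^w(f)+d_\pm^w(g)$ for bivariate Laurent polynomials: the coefficient of the extremal power of $w$ in $fg$ is the product of the extremal coefficients, each a nonzero element of $\bC[z^{\pm1}]$, and it is the integral domain property of that ring which prevents cancellation --- you say this, and it is exactly the content of the paper's $z_a,z_b$ manoeuvre.
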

\begin{proof}
Write
\begin{align}
a(\xi_1,\xi_2)&=\sum_{j=L_1^a}^{M_1^a}\sum_{k=L_2^a}^{M_2^a}
a_{j,k}e^{i(j\xi_1+k\xi_2)},\label{CS1} \intertext{and}
b(\xi_1,\xi_2)&=\sum_{j=L_1^b}^{M_1^b}\sum_{k=L_2^b}^{M_2^b}
b_{j,k}e^{i(j\xi_1+k\xi_2)},\label{CS2},
\end{align}
where
$\left[L_1^a,M_1^a\right]\times\left[L_2^a,M_2^a\right]=:Q_a$ and
$\left[L_1^b,M_1^b\right]\times\left[L_2^b,M_2^b\right]=:Q_b$ are
the minimal rectangles containing the support of the coefficients
$a_{j,k},b_{j,k}$ respectively.  Also define $a_{j,k},b_{j,k}$ to
be zero outside their respective rectangles. For given $p,q$ the
$(p,q)$th term of $a(\xi_1,\beta\xi_2)b(\xi_1,\xi_2)$ is
\begin{align}
&\sum_{j=-\infty}^{\infty}\sum_{k=-\infty}^{\infty}
a_{p-j,k}b_{j,q-\beta k},\label{CS3} \intertext{while the
$(p,q)$th term of $a(\xi_1,\xi_2)b(\alpha\xi_1,\xi_2)$ is}
&\sum_{j=-\infty}^{\infty}\sum_{k=-\infty}^{\infty} a_{p-\alpha
j,k}b_{j,q-k}.\label{CS4}
\end{align}

Consider the $(p,\beta M_2^a+M_2^b)$th coefficient of
$a(\xi_1,\beta\xi_2)b(\xi_1,\xi_2)$. Since
\[
a_{p-j,k}b_{j,q-\beta k}= a_{p-j,k}b_{j,\beta(M_2^a-k)+M_2^b}
\]
this term is nonzero only if $k\leq M_2^a$ and $\beta(M_2^a-k)
\leq 0$, that is, only if $k=M_2^a$. Thus the Fourier coefficient
is simply
\begin{equation}
\sum_{j=-\infty}^{\infty}a_{p-j,M_2^a}b_{j,M_2^b}.\label{CS5}
\end{equation}

On the other hand, the $(p,\beta M_2^a+M_2^b)$th term of
$a(\xi_1,\xi_2)b(\alpha\xi_1,\xi_2)$ is
\begin{equation}
\sum_{k=-\infty}^{\infty}\sum_{j=-\infty}^{\infty}a_{p-\alpha
j,k}b_{j,\beta M_2^a+M_2^b-k}.\label{CS5}
\end{equation}
There are nonzero terms in this sum only if $k\leq M_2^a$ and $
\beta M_2^a+M_2^b-k \leq M_2^b$. Thus all terms, and the Fourier
coefficient, are zero if $M_2^a \neq 0$.

Assume, by way of contradiction that this is the case, so that, by
the assumed identity $a(\xi_1,\xi_2)b(\alpha \xi_1,\xi_2)=
a(\xi_1,\beta\xi_2)b(\xi_1,\xi_2)$ we have
\begin{equation}
\sum_{j=\infty}^{\infty}a_{p-j,M_2^a}b_{j,M_2^b}=0.\label{CS6}
\end{equation}

For the case $p=M_1^a+M_1^b$ equation \eqref{CS6} implies
\begin{equation}
a_{M_1^a,M_2^a}b_{M_1^b,M_2^b}=0, \label{CS7}
\end{equation}
hence at least one of $a_{M_1^a,M_2^a},b_{M_1^b,M_2^b}$ is zero.
Define
\begin{align}
z_a&=\max_{j\in [L_1^a,M_1^a]}\left\{j:a_{j,M_2^a}\neq 0\right\},
\intertext{and} z_b&=\max_{j\in
[L_1^b,M_1^b]}\left\{j:b_{j,M_2^b}\neq 0\right\}.
\end{align}
Let $p=z_a+z_b$.
We may now rewrite \eqref{CS6} as
\begin{equation}
\sum_{j=-\infty}^{z_b-1}a_{p-j,M_2^a}b_{j,M_2^b}+a_{z_a,M_1^a}
b_{z_b,M_1^b}+\sum_{j=z_b+1}^{\infty}a_{p-j,M_2^a}b_{j,M_2^b}=0,\label{CS8}
\end{equation}
 For $j>z_b$, $b_{j,M_2^b}=0$ by the
definition of $z_b$.  For $j<z_b$, $p-j>z_a$ so by the definition
of $z_a$, $a_{p-j,M_2^a}=0$. It then follows from \eqref{CS8}
either $a_{z_a,M_2^a}$ or $b_{z_b,M_2^b}$ is zero, which is a
contradiction, and so we must have $M_2^a=0$.

An analogous argument to the one just given,
 beginning with consideration of $q=\beta L_2^a+ L_2^b$, gives
$L_2=0$ and so
\begin{equation}
a(\xi_1,\xi_2)=\sum_{j=L_1^a}^{M_1^a}a_{j,0}e^{ij\xi_1},
\end{equation}
Exchanging roles of the variables it follows that $b(\xi_1,\xi_2)$
is independent of $\xi_1$, as required.
\end{proof}

\begin{thm}
Let $\hrmra$ be a BMRA with respect to dilation pair $(A,B)$ with
scaling function $\phi\in\Ltwotwo$ of compact support. Then $\phi$
is separable.
\end{thm}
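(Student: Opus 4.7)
My plan is to reduce the theorem to Lemma \ref{trig} via the intertwining relation, and then to factor $\hat\phi$ into a tensor product using the univariate filters that this reduction produces. Compact support of $\phi$ enters at two crucial points: it forces the filters to be trigonometric polynomials (so Lemma \ref{trig} applies), and it makes $\hat\phi$ entire (so the intertwining identity and the final factorization extend globally from a.e.\ and local statements respectively).

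For the reduction, compactness of $\supp\phi$ means only finitely many translates appear in the scaling relations $\phi(A^{-1}x)=\sum c^A(k)\phi(x-k)$ and $\phi(B^{-1}x)=\sum c^B(k)\phi(x-k)$, so $m_\phi^A$ and $m_\phi^B$ are nonzero trigonometric polynomials with frequencies in $\bZ^2$. Combining \eqref{scalinga} and \eqref{scalingb} and using $AB=BA$ gives
\[
\bigl[m_\phi^A(B\xi)m_\phi^B(\xi)-m_\phi^A(\xi)m_\phi^B(A\xi)\bigr]\hat\phi(\xi)=0\quad\text{a.e.}
\]
Since $\hat\phi$ is entire and not identically zero, its zero set has empty interior; the bracketed difference is a real-analytic trigonometric polynomial and so vanishes identically. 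With $A=\mathrm{diag}(\alpha,1)$ and $B=\mathrm{diag}(1,\beta)$ this is precisely the hypothesis of Lemma \ref{trig} applied to $a=m_\phi^A$, $b=m_\phi^B$, yielding single-variable trigonometric polynomials with $m_\phi^A(\xi_1,\xi_2)=a(\xi_1)$ and $m_\phi^B(\xi_1,\xi_2)=b(\xi_2)$.

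Factorization then follows by the standard MRA infinite-product argument carried out in the second variable. Evaluating \eqref{scalinga} at $\xi_1=0$ and using $\hat\phi(0,0)\neq 0$ (a standard consequence of the density and orthonormality axioms) gives $a(0)=1$, and similarly $b(0)=1$. In a neighborhood of the origin the ratio $\rho(\xi_1,\xi_2):=\hat\phi(\xi_1,\xi_2)/\hat\phi(\xi_1,0)$ is well defined, equals $1$ on $\xi_2=0$, and satisfies $\rho(\xi_1,\beta\xi_2)=b(\xi_2)\rho(\xi_1,\xi_2)$ by \eqref{scalingb}. Iterating with $\xi_2$ replaced by $\beta^{-j}\xi_2$ gives $\rho(\xi_1,\xi_2)=\prod_{j\ge 1}b(\beta^{-j}\xi_2)$, a function of $\xi_2$ alone (the product converges locally uniformly since $b(0)=1$ and $b$ is smooth). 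Hence
\[
\hat\phi(\xi_1,\xi_2)\hat\phi(0,0)=\hat\phi(\xi_1,0)\hat\phi(0,\xi_2)
\]
in a neighborhood of the origin, and both sides are entire functions on $\bC^2$, so by the identity theorem the relation extends globally. Taking inverse Fourier transforms yields $\phi(x,y)=\phi_1(x)\phi_2(y)$, as required.

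The main technical obstacle is the bootstrap from the a.e.\ intertwining relation to a pointwise identity, which is what allows Lemma \ref{trig}'s trigonometric-polynomial hypothesis to be met cleanly. Once the filters are known to be univariate, the two-variable scaling relations decouple and the separability of $\hat\phi$ reduces to a one-variable MRA calculation performed independently in each coordinate.
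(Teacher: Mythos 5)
Your proposal is correct and follows essentially the same route as the paper: compact support forces the filters $m_\phi^A, m_\phi^B$ to be trigonometric polynomials, the intertwining relation together with Lemma \ref{trig} shows they are univariate, and iterating the scaling relations then factors $\hat\phi(\xi_1,\xi_2)$ as $\hat\phi(\xi_1,0)\hat\phi(0,\xi_2)$ up to normalisation. The only differences are in packaging --- you run the infinite product through the ratio $\hat\phi(\xi_1,\xi_2)/\hat\phi(\xi_1,0)$ and extend by the identity theorem for entire functions, whereas the paper iterates the filter relation directly and concludes almost everywhere.
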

\begin{proof}
 Recall that for a BMRA $\hrmra$ with respect to
 dilation pair $(A,B)$, for $(\xi_1,\xi_2)$ we have the
 fundamental intertwining relation
\begin{equation*}
m_\phi^A(\xi_1,\beta\xi_2)m_\phi^B(\xi_1,\xi_2)=
m_\phi^A(\xi_1,\xi_2)m_\phi^B(\alpha\xi_1,\xi_2).
\end{equation*}
If $\phi$ has compact support, then $\hat{\phi}(\xi)$ is
non-vanishing almost everywhere.  Furthermore $\phi(\alpha
\xi_1,\xi_2)$ and $\phi(\xi_1 ,\beta \xi_2)$ are finite linear
combinations of $\bZ^2$-translates of $\phi$ and so the filters
$m_\phi^A, m_\phi^B$ are trigonometric polynomials.
By Lemma \ref{trig} $m_\phi^A(\xi_1,\xi_2)=f(\xi_1)$ and
$m_\phi^B(\xi_1,\xi_2)=g(\xi_2)$, where $f$ and $g$ are
trigonometric polynomials in one variable. It is routine to check
that $\phi_1(\xi_1):=\phi(\xi_1,0)$, with filter $f(\xi_1)$ gives
rise to a rank 1 univariate MRA with respect to dilation by
$\alpha$. Likewise $\phi_2(\xi_2):=\phi(0,\xi_2)$
 with filter $g(\xi_2)$ gives rise to an MRA with respect to dilation by $\beta$.

Using the filter relation \eqref{scalinga} $N+1$ times gives
\begin{equation*}
\hat{\phi}(\xi_1,\xi_2)=\left(\prod_{n=0}^N
f(\alpha^{-n}\xi_1)\right)\hat{\phi}(\alpha^{-n}\xi_1,\xi_2).
\end{equation*}
As $\phi$ is compactly supported, $\hat{\phi}$ is continuous.
Furthermore $f$ is a trigonometric polynomial, hence Lipschitz,
and so the product \\
$\prod_{n=0}^N f(\alpha^{-n}\xi_1)$ converges
almost uniformly, to $F(\xi_1)$, say.  Hence
\begin{align}
\hat{\phi}(\xi_1,\xi_2)&=F(\xi_1)\hat{\phi}(0,\xi_2).
\intertext{Similarly}
\hat{\phi}(\xi_1,\xi_2)&=G(\xi_2)\hat{\phi}(\xi_1,0),
\end{align}
where $G(\xi_2)=\lim_{N\rightarrow\infty}\prod_{n=0}^N
g(\beta^{-n}\xi_2)$.  Hence, almost everywhere, we have
$F(\xi_1)=\hat{\phi}(\xi_1,0)$, $G(\xi_2)=\hat{\phi}(0,\xi_2)$ and
so $\hat{\phi}(\xi_1,\xi_2)=\hat{\phi_1}(\xi_1)\hat{\phi}(\xi_2)$
almost everywhere as required.
\end{proof}

\section{Higher Rank Meyer BMRAs}
In this section we construct a family of bidyadic BMRAs which
include purely nonseparable examples.  The construction is a
higher rank version of the well-known method used by Meyer to
construct wavelets belonging
to the Schwartz class. In particular
the Fourier transform of the scaling function and the resulting
wavelet have compact support. In fact the separable BMRAs obtained
from the tensor product of two rank-1 dyadic Meyer type MRAs are
included here as a special case.
For our construction the scaling function and wavelet have
discontinuous Fourier transforms which, as we have seen in the
last section, are unavoidable. Thus our wavelets do not lie in the
Schwartz class and it is not immediately obvious to what extent
the decay may be improved.

It is natural, by way of motivation and orientation, to recall the
construction of Meyer wavelets, which we now do.

Suppose that $\phi \in \Ltwo$ is a unit vector with orthonormal
translates which satisfies the scaling relation
\[
\phi(x/2) = \sum_{k\in \bZ} a_k\phi(x-k),
\]
with convergence in $\Ltwo$, and is such that $\hat{\phi}(\xi)$ is
continuous at $0$, with $\hat{\phi}(0) \neq 0$. Then from $\phi$
and the scaling unitary for $A=[2]$ one obtains an MRA $(\phi,
\fV)$. This fact is well-known (\cite{Woj}, Theorem 2.13).

A scaling function $\phi$ of this type may be constructed by
specifying its Fourier transform $\theta(\xi) = \hat{\phi}(\xi)$
to have the following three properties:
\medskip

(i) $\sum_{l \in \bZ}|\theta(\xi + 2\pi l)|^2 = \frac{1}{2 \pi},
\mbox{almost everywhere}. $

This condition is equivalent to the orthonormality of translates.
(See Lemma \ref{l:onset})

(ii) $\theta(2\xi) = \psi(\xi)\theta(\xi),$ for some
$2\pi$-periodic function $\psi(\xi)$. This  is equivalent to the
scaling relation above.

(iii) $\theta(\xi)$ is continuous at $0$ with $\theta(0) \neq 0.$
\medskip

To construct such a function $\theta(\xi)$ one may take the
following route of Meyer and construct first a nonnegative
function $\theta$ on $\bR$ which is symmetric on $[-2\pi , 2\pi]$,
with
\[
\theta(\xi)^2 + \theta(\xi-2\pi))^2 = \frac{1}{2\pi},  \quad
\mbox{ on } [0,2\pi ],
\]
\[
\theta(\xi) = \frac{1}{\sqrt{2\pi}}, \quad \mbox{ for } |\xi| <
\frac{2\pi}{3}, \]
\[ \theta(\xi) = 0,  \quad \mbox{
for } |\xi| > \frac{4\pi}{3},
\]
Thus (i) holds, with at most two nonzero summands for each $\xi$.
Let $f(\xi)$ be the $2\pi$-periodic extension of
$\sqrt{2\pi}\theta(2\xi)$ for $\xi \in [-\pi, \pi]$. Then it
follows that the scaling relation (ii) holds. If in addition
$\theta(\xi)$ is continuous at $0$ with $\theta(0) \neq 0$ then
the construction is complete.

It is completely elementary to construct a function $\theta$ on
$\bR$ with the properties above. The main point in the
construction is that, firstly, since $\theta (\xi)
=\frac{1}{\sqrt{2\pi}}$, for $|\xi | \leq \frac{2\pi}{3},$ we have
the scaling relation
\[
f(\xi)\theta(\xi) = \sqrt{2\pi }\theta(2\xi)\frac{1}{\sqrt{2\pi}}
= \theta(2\xi),
\]
which holds in fact for the bigger range $|\xi| \leq \pi $ since
$\theta(2\xi), $ and hence $f(\xi)$, are zero in the range
$\frac{2\pi}{3} \leq |\xi| \leq \pi$. Thus, there is no obstacle
to periodically extending $f(\xi)$ to a function on $\bR$ and
maintaining the scaling relation (ii).

We are going to follow a similar procedure to construct a bidyadic
scaling function in $\Ltwo$ which determines a multiresolution for
$A=\left[\begin{smallmatrix}2 &0\\0& 1\end{smallmatrix}\right],$
$B=\left[ \begin{smallmatrix}1&0\\0&2
\end{smallmatrix}\right]$. However,
while one can readily construct a nonnegative function
$\theta(\xi, \xi_2)$ on $[-2\pi ,2\pi ]^2$ with the properties
\[
\theta(\xi) = \frac{1}{{2\pi}} \mbox{ for } \xi \in
(-\frac{2\pi}{3},\frac{2\pi}{3})^2
\]
\[ \theta(\xi) = 0  \mbox{
for } \xi \notin [-\frac{4\pi}{3},\frac{4\pi}{3}]^2
\]
and
\[ \theta(\xi)^2+\theta(\xi-2\pi (1,0))^2+\theta(\xi-
2\pi(0,1))^2+\theta(\xi -2\pi (1,1))^2=\frac{1}{4\pi^2},
\]
there is no guarantee that one can periodically extend the
functions $\psi^A(\xi), \psi^B(\xi)$ defined on the support of
$\theta$ by
\[
\psi^A(\xi) = \frac{\theta(2\xi_1,\xi_2)}{\theta (\xi_1,
\xi_2)},\quad  \psi^B(\xi) = \frac{\theta(\xi_1,2\xi_2)}{\theta
(\xi_1, \xi_2)}.
\]

Our first main task is to construct $\theta$ with extra structure
so that this will be possible. This step is necessary because as
we have seen in the general theory, if $\phi$ and $A, B$ provide a
BMRA then, by virtue of the subspace inclusions, $\phi$ will have
periodic filter functions $m_\phi^A(\xi)$ and $m_\phi^B(\xi)$
satisfying the intertwining relation. In fact we are arguing here
in the reverse direction. We construct periodic extensions
$\psi^A(\xi), \psi^B(\xi)$. These will be the filters for $\phi$ ,
since
\[
\hat{\phi}(A\xi) = \theta(2\xi_1,\xi_2) = \psi^A(\xi)\theta(\xi) =
\psi^A(\xi)\hat{\phi}(\xi),
\]
\[
\hat{\phi}(B\xi) = \theta(\xi_1,2\xi_2) = \psi^B(\xi)\theta(\xi) =
\psi^B(\xi)\hat{\phi}(\xi).
\]
The intertwining condition follows from the equalities
\[
\psi^A(\xi_1,2\xi_2)\psi^B(\xi_1,\xi_2) = \left(
\frac{\theta(2\xi_1,2\xi_2)}{\theta(\xi_1,2\xi_2)}\right) \left(
\frac{\theta(\xi_1,2\xi_2)}{\theta(\xi_1,\xi_2)}\right)
\]
\[=
\frac{\theta(2\xi)}{\theta(\xi)} = \left(
\frac{\theta(2\xi_1,\xi_2)}{\theta(\xi_1,\xi_2)}\right) \left(
\frac{\theta(2\xi_1,2\xi_2)}{\theta(2\xi_1,\xi_2)}\right) =
\psi^A(\xi_1,\xi_2)\psi^B(2\xi_1,\xi_2).
\]
However, such a condition does not yet guarantee the orthogonality
structure of the commuting lattice property and we must construct
$\theta$ with further structure to ensure this. We do this in
Theorem 7.2; we consider
 a function $\phi\in\Ltwotwo$ and identify sufficient
conditions on its Fourier transform $\hat{\phi}$ that ensure that
$\phi, A, B$ determine a BMRA.

Note first the following lemma. This follows from the rank one
case (which is a basic fact; see \cite{HerWei}, \cite{Woj}), since
every BMRA $\hrmra$ contains the MRA $\{V_{i,i}:i \in \bZ\}$.

\begin{lem}\label{intersectunionlem}
Let $\phi\in\Ltwod$ be such that $\{\phi(x-k):k\in\bZ^d\}$ is an
orthonormal set in $L^2(\bR^d)$ spanning the closed subspace
$V_{0,0}$. Let $V_{i,j}=D_A^i D_B^j V_{0,0}$. Then
$\bigcap_{(i,j)\in\bZ^2}V_{i,j}=\{0\}$. If, moreover,
$\hat{\phi}(0) \neq 0$ and $\hat{\phi}(\xi)$ is continuous at 0,
then $\bigcup_{(i,j)\in\bZ^2}V_{i,j}$ is dense in $\Ltwod$.
\end{lem}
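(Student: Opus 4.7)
The plan is to reduce both assertions to the standard rank-one MRA facts by restricting attention to the diagonal subfamily $\{V_{i,i} : i \in \bZ\}$. Since $D_A$ and $D_B$ commute, we have
\[
V_{i,i} = (D_A D_B)^i V_{0,0} = D_{AB}^i V_{0,0},
\]
where $AB$ is itself a strict (expansive, integer) dilation matrix, being the product of a commuting dilation pair of the block-diagonal form \eqref{CDP1}. Combined with the orthonormal-translates hypothesis on $\phi$, this presents $(\phi, \{V_{i,i}\}_{i\in\bZ})$ in precisely the form to which the classical rank-one MRA theorems apply, yielding $\bigcap_{i} V_{i,i} = \{0\}$ and, under the additional continuity and non-vanishing of $\hat{\phi}$ at $0$, density of $\bigcup_{i} V_{i,i}$ in $\Ltwod$.

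The two claims of the lemma then drop out by set-theoretic inclusions over the indexing sets. Since $\{(i,i):i\in\bZ\}\subset\bZ^2$,
\[
\bigcap_{(i,j)\in\bZ^2} V_{i,j} \subseteq \bigcap_{i\in\bZ} V_{i,i} = \{0\},
\]
proving the intersection claim. Symmetrically, $\bigcup_{i\in\bZ} V_{i,i} \subseteq \bigcup_{(i,j)\in\bZ^2} V_{i,j}$, and density of the smaller union in $\Ltwod$ forces density of the larger, giving the union claim.

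The only substantive point needing care is that the standard rank-one results cited from \cite{Woj}, \cite{HerWei} presuppose the nesting $V_{i,i} \subseteq V_{i+1,i+1}$ as part of the MRA axioms. In the contexts in which this lemma is actually applied (namely within a BMRA $\hrmra$) this nesting is supplied directly by axiom (i) of Definition \ref{def:bmra}, so the rank-one facts are invoked verbatim. The remainder is routine passage to a sub-indexing and requires no further estimates.
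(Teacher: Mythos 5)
Your proof is correct and is essentially the paper's own argument: the authors dispose of this lemma in a single sentence by observing that the diagonal family $\{V_{i,i}\}_{i\in\bZ}$ is a rank-one system for the single dilation $AB$ and citing the standard facts from \cite{HerWei}, \cite{Woj}; your reduction over the index sets (intersection shrinks, union grows, when passing from the diagonal to all of $\bZ^2$) is exactly the intended content. One caution about your final paragraph, though: the lemma is stated, and in the proof of Theorem \ref{t:rank2meyer} is invoked, \emph{without} the nesting $V_{i,i}\subseteq V_{i+1,i+1}$ being available --- there it is used to deduce conditions (iii) and (iv) of Definition \ref{def:bmra} before the inclusion axiom (i) has been verified --- so you cannot lean on axiom (i) of a BMRA to supply that nesting. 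Fortunately no circularity arises, because the standard rank-one proofs of both facts use only the orthonormal basis of dilated translates of $\phi$ in each diagonal space separately (showing the norm of the projection of a nice $f$ onto $V_{-j,-j}$ tends to $0$ for the intersection claim, and tends to $\|f\|$, forcing the projections to converge to $f$, for the density claim) and never the monotonicity of the spaces; so the lemma holds exactly as stated and your reduction goes through with that caveat deleted rather than invoked.
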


 We introduce
notation for four families of rectangles which lie in the big
square $(-\frac{4\pi}{3},\frac{4\pi}{3})^2$.

For $i,j,k \in \{0,1\}$, let
\[
I_{(i,j,k)}=\left((-1)^i2^{i+k}
\frac{\pi}{3},(-1)^i2^{(1-i)+k}\frac{\pi}{3}\right)\times
\]
\[
\left((-1)^j
2^{j+(1-k)}\frac{\pi}{3},(-1)^j2^{(1-j)+(1-k)}\frac{\pi}{3}\right),
\]and
\[J_{(i,j)}=\left(-\frac{\pi}{3}+i(-1)^j\pi,\frac{\pi}{3}+i(-1)^j\pi\right)\times
\]
\[
\left(-\frac{\pi}{3}+(1-i)(-1)^j\pi,\frac{\pi}{3}+(1-i)(-1)^j\pi\right).
\]
These rectangles lie between the big square and the small square
$(-\frac{2\pi}{3},\frac{2\pi}{3})^2$. In particular  $I_{0,0,0}$
is the rectangle which is the image of the north-east square
$(\frac{2\pi}{3},\frac{4\pi}{3})^2$ under $A^{-1}$ and  the
rectangles $I_{i,j,k}$ have similar determinations. Note also that
$J_{0,0}$ is the closure of the union of the disjoint rectangles
$A^{-k}((\frac{2\pi}{3},\frac{4\pi}{3})^2)$, $k=1,2,\dots $.

\begin{figure} [ht]
\input{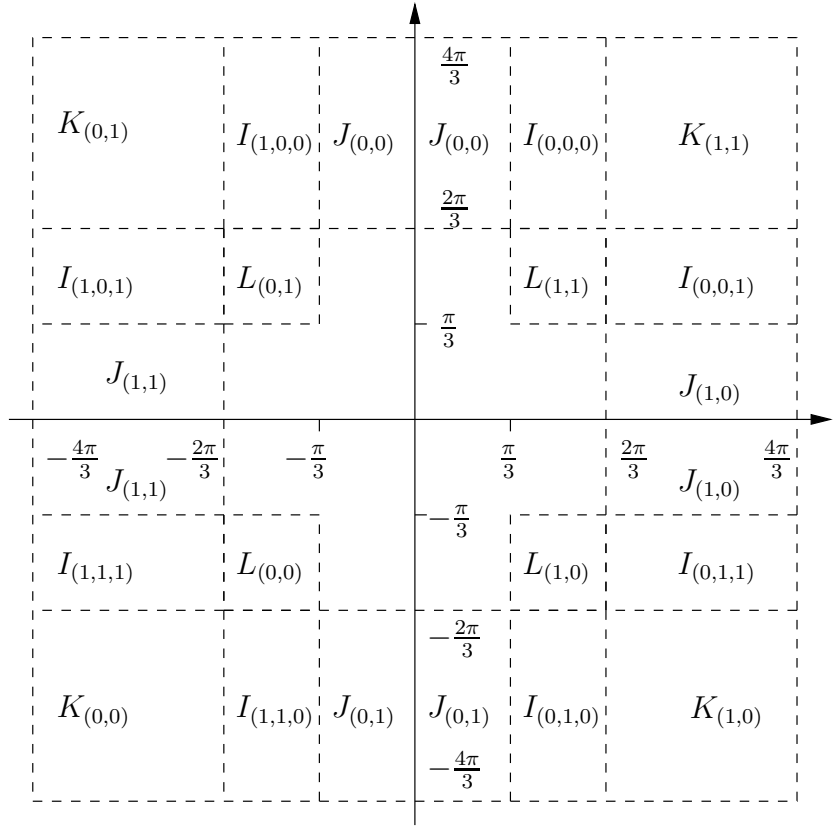}
\caption{Border rectangles in the support of $\hat{\phi}$}
\label{meyer}
\end{figure}

Also for $i,j = 0,1$ let
\[ K_{i,j}=\left( \frac{-4\pi}{3}+i(2\pi),\frac{-2\pi}{3}
+i(2\pi)\right)\times \left(\frac{-4\pi}{3}+j(2\pi),
\frac{-2\pi}{3}+j(2\pi)\right), \]
\[ L_{i,j}=\left( \frac{-2\pi}{3}+i(\pi),\frac{-\pi}{3}+
i(\pi)\right)\times \left(\frac{-2\pi}{3}+j(\pi),
\frac{-\pi}{3}+j(\pi) \right). \] Thus $L_{0,0}$ lies in the south
west corner of the small square.

In the following theorem $\hat{\phi}$ is assumed to be supported
on the big square. Condition (d) is a condition on the restriction
of $\hat{\phi}$ to the four corner squares (translates of
$(\frac{2\pi}{3},\frac{4\pi}{3})^2)$. Conditions (e) and (f) show
how $\hat{\phi}$ is determined on the border rectangles by the
values of $\hat{\phi}$ on pairs of corner squares. Condition (g)
is an additional condition on the restrictions to corner squares.

Despite the detail in conditions (a)-(g) the construction of
examples of such functions $\hat{\phi}$  is quite elementary.
(Also there is further flexibility to arrange $\phi$ to be
real-valued.) Indeed one may define $\hat{\phi}$ on the four
corner squares to comply with (d) and (g), and  then $\hat{\phi}$
is constructed (and uniquely determined) by the  conditions
(a),(b),(f). For example, in Figure 3 we show the regions of
constancy of a function $\hat{\phi}$ which takes constant values
on the trangular subsets of the corner squares. Condition (d) is
elementary and (g) holds trivially with both products in (g)
identically zero. It is evident, from the triangularity of support
in the corners, that $\hat{\phi}$ and hence ${\phi}$ are not
separable.

\begin{center}
\begin{figure}[h]
\centering
\includegraphics[width=8cm]{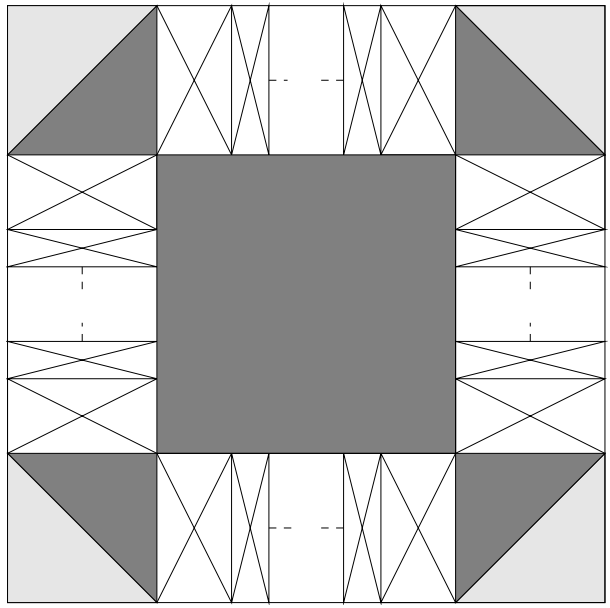}
\caption{Constancy regions of a simple $\hat{\phi}$ example.}
\end{figure}
\end{center}

\begin{thm}\label{t:rank2meyer}
Let $\phi \in \Ltwotwo$ satisfy the following properties :\\
(a) For $(\xi_1,\xi_2)\in\hat{\bR}^2$ we have
$0\leq\hat{\phi}(\xi_1,\xi_2)\leq\tfrac{1}{2\pi}$.\\
(b) For $(\xi_1,\xi_2) \in
(-\tfrac{2\pi}{3},\tfrac{2\pi}{3})^2$ we have
$\hat{\phi}(\xi_1,\xi_2)=\tfrac{1}{2\pi}$.\\
(c) For $(\xi_1,\xi_2) \not\in
(-\tfrac{4\pi}{3},\tfrac{4\pi}{3})^2$we have
$\hat{\phi}(\xi_1,\xi_2)=0$.\\
(d)  For $(\xi_1,\xi_2)\in (\tfrac{2\pi}{3},\tfrac{4\pi}{3})^2$,
$\hat{\phi(\xi)}$ is nonzero and
$$\begin{aligned}& \sum_{i,j\in \{0,1\}}|\hat{\phi}(\xi_1-2\pi
i,\xi_2 -2\pi j)|^2=\frac{1}{4\pi^2}.\end{aligned}
$$
(e) For
$(\xi_1,\xi_2)\in I_{(i,j,k)}$
   and $i,j,k \in \{0,1\}$,
   $$
   \begin{aligned}\hat{\phi}(\xi_1,\xi_2)^2=
   (\frac{1}{2\pi})^2{\frac{\theta_1(\xi_1,\xi_2)^2}
   {\theta_1(\xi_1,\xi_2)^2+\theta_2(\xi_1,\xi_2)^2}}\end{aligned}$$
 where
$$
\begin{aligned}
\theta_1(\xi_1,\xi_2)&=\hat{\phi}(2^{(1-k)}\xi_1,2^k \xi_2),\\
\theta_2(\xi_1,\xi_2)&=\hat{\phi}(2^{(1-k)}\xi_1-2\pi(-1)^i
k,2^k\xi_2-2\pi(-1)^j(1-k))\end{aligned}$$
 (f)  For $(\xi_1,\xi_2)
\in J_{(i,j)}$ and $i,j\in \{0,1\}$,
$$\hat{\phi}(\xi_1,\xi_2)=\hat{\phi}(2^{(1-i)}\xi_1,2^i\xi_2).
$$
(g) For $(\xi_1,\xi_2) \in (\tfrac{2\pi}{3},\tfrac{4\pi}{3})^2$,

$$\hat{\phi}(\xi_1,\xi_2)\hat{\phi}(\xi_1-2\pi,
\xi_2-2\pi)^2-\hat{\phi}(\xi_1-2\pi,\xi_2)
\hat{\phi}(\xi_1,\xi_2-2\pi)^2=0.$$

Let $V_{0,0}$ be the closed subspace spanned by $
\{\phi(\xi_1-k_1,\xi_2-k_2):k_1,k_2\in\bZ\}$, let
$V_{i,j}=D_A^iD_B^jV_{0,0}$ and let $\fV= \{V_{i,j}:i,j\in \bZ\}$.
Then $\hrmra$ is a BMRA with respect to the dilation pair $A,B$.
\end{thm}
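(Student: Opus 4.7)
The plan is to verify the six axioms of Definition \ref{def:bmra} in the order (vi), then (iii)--(iv), then (ii)--(i), and finally (v), invoking the seven hypotheses (a)--(g) in essentially one-to-one correspondence. For axiom (vi), orthonormality of the translates of $\phi$, I appeal to Lemma \ref{l:onset}: the task reduces to showing $\sum_{k\in\bZ^2}|\hat{\phi}(\xi-2\pi k)|^2=1/(4\pi^2)$ almost everywhere. Using the compact support (c), only finitely many summands are nonzero for each $\xi$. Taking $\xi$ in a fundamental domain and splitting according to the regions of Figure \ref{meyer}, the verification proceeds case by case: on the small square, (b) yields the identity from the single term $k=0$; on a piece of a corner square $K_{i,j}$, the four relevant translates populate all four corner squares and (d) supplies the sum; on a strip $J_{i,j}$, (f) sends the problem (by iteration in the $\xi_1$- or $\xi_2$-coordinate if necessary) into the $I$-tabs; on an $I$-tab, (e) together with its counterpart on the antipodal tab produces a pointwise partition of unity of the form $\theta_1^2/(\theta_1^2+\theta_2^2)+\theta_2^2/(\theta_1^2+\theta_2^2)=1$ multiplied by $(1/(2\pi))^2$.

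Axioms (iii) and (iv) then follow at once from Lemma \ref{intersectunionlem}, since (b) forces $\hat{\phi}(0)=1/(2\pi)\neq 0$ and $\hat{\phi}$ is constant, hence continuous, on a neighborhood of the origin. Axiom (ii) is definitional. For axiom (i), it suffices by Proposition \ref{p:V00} to verify $\phi\in V_{1,0}\cap V_{0,1}$, which I will do by constructing $2\pi\bZ^2$-periodic filter functions $m^A_\phi(\xi)=\hat{\phi}(2\xi_1,\xi_2)/\hat{\phi}(\xi_1,\xi_2)$ and $m^B_\phi$ analogously. Each is initially defined on $\{\hat{\phi}\neq 0\}$ and extended by zero elsewhere; the support bound (c) makes the zero extension consistent across $2\pi$-boundaries, while the scaling identity built into (f) guarantees the required periodicity wherever the ratio is nonzero. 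The general nesting $V_{i,j}\subseteq V_{l,m}$ for $i\leq l$, $j\leq m$ then follows by applying $D_A^l D_B^m$ to these two inclusions.

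The most delicate step is axiom (v), the commuting-projection-lattice condition. By the analysis of Section \ref{s:commutinglattice}, which applies verbatim in the present dyadic setting, this property is equivalent, once (i)--(iv) and (vi) are in hand, to the orthogonality of the two wavelet-like rows of the $3\times 4$ filter matrix for $\{\phi,\psi^A,\psi^B\}$ and $T=AB$, that is, to the commuting-lattice filter identity displayed at the close of that section. Substituting $m^A_\phi$ and $m^B_\phi$ and evaluating at the four translates $\xi,\xi+(\pi,0),\xi+(0,\pi),\xi+(\pi,\pi)$ produces an expression in ratios of values of $\hat{\phi}$. After clearing denominators and using (e) and (f) to transport each factor back to a value on one of the corner squares $K_{i,j}$, the identity should collapse to precisely condition (g) at the corresponding point of $(2\pi/3,4\pi/3)^2$. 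The main obstacle is this final bookkeeping; it is essentially mechanical, but requires tracking each filter evaluation through the regions of Figure \ref{meyer} and applying the substitutions of (e) and (f) consistently.
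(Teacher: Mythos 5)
Your proposal follows essentially the same route as the paper's own proof: axiom (vi) via Lemma \ref{l:onset} checked region by region ((b),(c) on the small square, (d) on the corner squares, the partition of unity from (e) on the $I$-tabs, iteration of (f) on the $J$-strips), axioms (iii)--(iv) from Lemma \ref{intersectunionlem}, axiom (i) via Proposition \ref{p:V00} by building the periodic filter ratios, and axiom (v) reduced through the Section \ref{s:commutinglattice} filter identity to condition (g). The ``mechanical bookkeeping'' you defer at the end is exactly what the paper carries out --- first localizing the orthogonality expression to $L_{0,0}$ by its sign-periodicity under $\pi\bZ^2$ translation, then factoring the resulting polynomial in the quantities $\Phi^{a,b}$ so that the second factor equals $\tfrac{1}{4\pi^2}$ by (d) and the first is condition (g) --- so your outline is correct and matches the published argument.
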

\begin{proof}
We show that the conditions of Definition \ref{def:bmra} hold. Let
$V_{i,j}$ be as above. Then  part (ii) of Definition
\ref{def:bmra} is automatically satisfied. By Lemma
\ref{intersectunionlem}, parts (iii) and (iv) of Definition
\ref{def:bmra} will follow from (vi). Let
\[
S(\xi_1,\xi_2)=\sum_{(k_1,k_2)\in \bZ^2}|\hat\phi(\xi_1-2\pi
k_1,\xi_2-2\pi k_2)|^2,
\]
then (iv) will follow from Lemma \ref{l:onset}
 if we  show that $S(\xi_1,\xi_2)=\tfrac{1}{4\pi^2}$ almost
everywhere. It is immediate from (b) and (c) of the theorem that
this is the case for $(\xi_1,\xi_2)\in
(\tfrac{-2\pi}{3},\tfrac{2\pi}{3})^2+2\pi(k_1,k_2)$. Likewise from
(d) it follows for $(\xi_1,\xi_2)\in
(\tfrac{2\pi}{3},\tfrac{4\pi}{3})^2+2\pi(k_1,k_2)$. For
$(\xi_1,\xi_2)\in
I_{(0,0,0)}=(\tfrac{\pi}{3},\tfrac{2\pi}{3})\times
(\tfrac{2\pi}{3},\tfrac{4\pi}{3})$ we have, from (e),
\begin{align*}
S(\xi_1,\xi_2)=&|\hat{\phi}(\xi_1,\xi_2)|^2+|\hat{\phi}(\xi_2,\xi_2-2\pi)|^2\\
=&\frac{1}{4\pi^2}\frac{\hat{\phi}(2\xi_1,\xi_2)^2}{\hat{\phi}(2\xi_1,\xi_2)^2+
\hat{\phi}(2\xi_1,\xi_2-2\pi)^2}\\
&+\frac{1}{4\pi^2}\frac{\hat{\phi}(2\xi_1,\xi_2-2\pi)^2}{\hat{\phi}(\xi_1,\xi_2)^2
+\hat{\phi}(\xi_1,\xi_2-2\pi)^2} \\
 =&\frac{1}{4\pi^2}
\end{align*}
as required. It is straightforward to carry out the preceding
calculation for general $I_{(i,j,k)}$, $i,j,k\in \{0,1\}$ and so
we obtain
$$S(\xi_1,\xi_2)=\tfrac{1}{4\pi^2}\quad  \mbox{ for all }
(\xi_1,\xi_2) \in \{I_{(i,j,k)}+2\bZ^2:i,j,k\in\{0,1\}\}.
$$

Now let $(\xi_1,\xi_2)\in
(0,\tfrac{\pi}{3})\times(\tfrac{2\pi}{3},\tfrac{4\pi}{3})$. Then
there exists $r\in\bN$ such that $\tfrac{\pi}{3}\leq 2^r \xi_1
\leq \tfrac{2\pi}{3}$, and
$$
S(\xi_1,\xi_2)=\hat{\phi}(2^r\xi_1,\xi_2)^2+
\hat{\phi}(2^r\xi_1,\xi_2-2\pi)^2=\tfrac{1}{4\pi^2}.
$$
Again, this calculation may be repeated to show that
$S(\xi_1,\xi_2)=\tfrac{1}{4\pi^2}$ for all $(\xi_1,\xi_2)\in
\{J_{(i,j)}+2\pi \bZ^2:i,j\in \bZ\}$.

 Hence we have shown that
$S(\xi_1,\xi_2)=\frac{1}{4\pi^2}$ almost everywhere, and so
$\hrmra$ satisfies (iii),(iv) and (vi) of Definition
\ref{def:bmra}.

Next we show the BMRA inclusion property (i).  From the definition
of $V_{i,j}$ it suffices to show $V_{-1,0}\subset V_{0,0}$ and
$V_{0,-1}\subset V_{0,0}$. We  show the former. By Proposition
\ref{p:V00} this is equivalent to the equation
\begin{equation*}
\hat{\phi}(2\xi_1,\xi_2)=g(\xi_1,\xi_2)\hat{\phi}(\xi_1,\xi_2)
\end{equation*}
for some $2\pi\bZ^2$ periodic function $g$ with square summable
restriction to $2\pi\bT^2$. We  show such a function exists. The
equality holds trivially regardless of the value taken by $g$ for
$(\xi_1,\xi_2)\notin(\tfrac{-4\pi}{3},\tfrac{4\pi}{3})^2$, hence
if $g$ may be constructed to be $2\pi \bZ^2$ periodic on this
square then its periodic extension will satisfy the equation
everywhere.

 For $(\xi_1,\xi_2)$ such that
$\tfrac{2\pi}{3}<|\xi_1|<\tfrac{4\pi}{3}$,
$\hat{\phi}(2\xi_1,\xi_2)=0$ but $\hat{\phi}(\xi_1,\xi_2)\neq 0$
so that we must take $g(\xi_1,\xi_2)=0$; also
$g(\xi_1,\xi_2)=g(\xi_1-2\pi,\xi_2)=0$ for $(\xi_1,\xi_2)$ such
that $\tfrac{2\pi}{3}<\xi_1<\tfrac{4\pi}{3}$, so periodicity is
preserved. For $(\xi_1,\xi_2)\in
(-\tfrac{2\pi}{3},\tfrac{2\pi}{3})^2$ we have
$\hat{\phi}(\xi_1,\xi_2)=\tfrac{1}{2\pi}$, so we set
$g(\xi_1,\xi_2)=\hat{\phi}(2\xi_1,\xi_2)$; as
$$((-\tfrac{2\pi}{3},\tfrac{2\pi}{3})^2+
2\pi(k_1,k_2))\cap(-\tfrac{4\pi}{3},
\tfrac{4\pi}{3})^2=\emptyset$$ if $(k_1,k_2) \neq (0,0)$,
periodicity is maintained.

 For the remaining $(\xi_1,\xi_2)
\in(\tfrac{-4\pi}{3},\tfrac{4\pi}{3})^2$, both
$\hat{\phi}(\xi_1,\xi_2),$ and $\hat{\phi}(2\xi_1,\xi_2)$ are
nonzero and we may define, as in the rank 1 case described
earlier,
\[
g(\xi_1,\xi_2)=\frac{\hat{\phi}(2\xi_1,\xi_2)}{\hat{\phi}(\xi_1,\xi_2)}.
\]
Then for $(\xi_1,\xi_2)\in I_{(0,0,0)}$ we have
\begin{align*}
g(\xi_1,\xi_2)&=\hat{\phi}(2\xi_1,\xi_2)
\sqrt{\frac{\hat{\phi}(2\xi_1,\xi_2)^2+
\hat{\phi}(2\xi_1,\xi_2-2\pi)^2}{\hat{\phi}(2\xi_1,\xi_2)^2}}\\
&=\sqrt{\hat{\phi}(2\xi_1,\xi_2)^2+\hat{\phi}(2\xi_1,\xi_2-2\pi)^2}\\
&=\hat{\phi}(2\xi_1,\xi_2-2\pi)\sqrt{\frac{(\hat{\phi}(2\xi_1,(\xi_2-2\pi)+2\pi))^2+(\hat{\phi}(2\xi_1,\xi_2-2\pi))^2}{(\hat{\phi}(2\xi_1,\xi_2-2\pi))^2}}\\
&=\frac{\hat{\phi}(2\xi_1,\xi_2-2\pi)}{\hat{\phi}(\xi_1,\xi_2-2\pi)}=g(\xi_1,\xi_2-2\pi)
\end{align*}
so that $g$ is periodic on $I_{(0,0,0)}\cup I_{(0,1,0)}$.  Again
repeating a variation of this  calculation, or by appealing to
symmetry, we obtain periodicity for points in all $I_{(i,j,k)}$.

It remains to consider $(\xi_1,\xi_2)\in J_{(0,0)}\cup J_{(0,1)}$.
Periodicity of $g$ for such $(\xi_1,\xi_2)$ follows from the
recursive definition of $\hat{\phi}(\xi_1,\xi_2)$ on this interval
and the periodicity of $g$ on $I_{i,j,k}$.  Hence $g$ is periodic
and $V_{-1,0}\subset V_{0,0}$. By symmetry it follows that
$V_{0,-1}\subset V_{0,0}$ and the other inclusions in  Definition
\ref{def:bmra} (i) are satisfied.

Finally we must show that the spaces $V_{i,j}$ form a commuting
lattice. It is sufficient to show that $(V_{0,1}\ominus
V_{0,0})\perp (V_{1,0}\ominus V_{0,0})$. As we have discussed in
Section 5.1, we require
\begin{equation}
e^{-i(\xi_1+\xi_2)}f(\xi_1,\xi_2)=0,\label{gcondition}\end{equation}for
almost every $(\xi_1,\xi_2)\in \hat{\mathbb{R}}^2$, where
\begin{align*}f(\xi_1,\xi_2)&=A_{1,2}^{\pi,0}B_{1,1}^{0,0}B_{2,1}^{0,\pi}A_{1,1}^{0,0}-A_{1,2}^{0,0}B_{1,1}^{\pi,0}B_{2,1}^{0,\pi}A_{1,1}^{\pi,0}\\
& \quad
+A_{1,2}^{\pi,0}B_{1,1}^{0,\pi}B_{2,1}^{0,0}A_{1,1}^{0,\pi}+
A_{1,2}^{0,0}B_{1,1}^{\pi,\pi}B_{2,1}^{0,0}A_{1,1}^{\pi,\pi}.\end{align*}
The factors here arise from the filters $m_\phi^A, m_\phi^B$ of
$\phi$. Note that we have determined these filters as ratios, for
example, $m_\phi^A$ is given explicitly by the function $g(\xi_1,
\xi_2)$ above.

Our aim is to show that the unwieldy expression above is
equivalent to condition (g) for a function $\phi$ satisfying
conditions (a)-(f). To that end we first observe that if on some
rectangle at least one term in each of the products present in $f$
vanishes, then $f$ vanishes identically in that rectangle.

 For
$(\xi_1,\xi_2) \in \{K_{i,j}\}_{i,j\in\{0,1\}}$,
$A_{1,2}^{\pi,0}=A_{1,2}^{0,0}=B_{2,1}^{0,\pi}=B_{2,1}^{0,0}=0$,
hence at least one term in each of the products in
$f(\xi_1,\xi_2)$ is zero and so $f(\xi_1,\xi_2)=0$ for  almost
every $(\xi_1,\xi_2) \in \{K_{i,j}\}_{i,j=0,1}$.  For
$(\xi_1,\xi_2)$ such that
$\tfrac{2\pi}{3}<|\xi_2|<\tfrac{4\pi}{3}$,
$A_{1,2}^{\pi,0}=A_{1,2}^{0,0}=0$, and so again for almost all
such $(\xi_1,\xi_2)$ we have $f(\xi_1,\xi_2)=0$.  Likewise, for
$(\xi_1,\xi_2)$ such that
$\tfrac{2\pi}{3}<|\xi_1|<\tfrac{4\pi}{3}$,
$B_{2,1}^{0,\pi}=B_{2,1}^{0,0}=0$ and so $f(\xi_1,\xi_2)=0$ for
such $(\xi_1,\xi_2)$.  Hence we may restrict our attention to
$(\xi_1,\xi_2)\in (-\tfrac{2\pi}{3},\tfrac{2\pi}{3})^2$.
For $(\xi_1,\xi_2)$ in the central
square $(-\tfrac{\pi}{3},\tfrac{\pi}{3})^2$,
$A_{1,1}^{\pi,0}=A_{1,1}^{\pi,\pi}=B_{1,1}^{0,\pi}=
B_{1,1}^{\pi,\pi}=A_{1,2}^{\pi,0}=B_{2,1}^{0,\pi}=0$,
 and hence $f(\xi_1,\xi_2)=0$.  For $(\xi_1,\xi_2)$ such
 that $\tfrac{\pi}{3}<|\xi_2|<\tfrac{2\pi}{3}$ and
 $|\xi_1|<\tfrac{\pi}{3}$, $A_{1,2}^{\pi,0}=A_{1,1}^{\pi,0}=A_{1,1}^{\pi,\pi}
 =0$.  Likewise for $(\xi_1,\xi_2)$ with
 $\tfrac{\pi}{3}<|\xi_1|<\tfrac{2\pi}{3}$ and
 $|\xi_2|<\tfrac{\pi}{3}$, $B_{1,1}^{0,\pi}=B_{1,1}^{\pi,\pi}
 =B_{2,1}^{0,\pi}=0$.  Hence $f$ vanishes for  almost every
  $(\xi_1,\xi_2)$ outside  $L_{i,j\in\{0,1\}}$.

Before proceeding, we observe that, for  almost every
$(\xi_1,\xi_2)\in \mathbb{R}^2$
\begin{align*}
f(\xi_1+\pi,\xi_2)=&A_{1,2}^{2\pi,0}B_{1,1}^{\pi,0}B_{2,1}^{\pi,\pi}A_{1,1}^{\pi,0}-A_{1,2}^{\pi,0}B_{1,1}^{2\pi,0}B_{2,1}^{\pi,\pi}A_{1,1}^{2\pi,0}\\
&+A_{1,2}^{2\pi,0}B_{1,1}^{\pi,\pi}B_{2,1}^{\pi,0}A_{1,1}^{\pi,\pi}+A_{1,2}^{\pi,0}B_{1,1}^{2\pi,\pi}B_{2,1}^{\pi,0}A_{1,1}^{2\pi,\pi};\\
=&A_{1,2}^{0,0}B_{1,1}^{\pi,0}B_{2,1}^{0,\pi}A_{1,1}^{\pi,0}-A_{1,2}^{\pi,0}B_{1,1}^{0,0}B_{2,1}^{0,\pi}A_{1,1}^{0,0}\\
&+A_{1,2}^{0,0}B_{1,1}^{\pi,\pi}B_{2,1}^{0,0}A_{1,1}^{\pi,\pi}+A_{1,2}^{\pi,0}B_{1,1}^{0,\pi}B_{2,1}^{0,0}A_{1,1}^{0,\pi};\\
=&-f(\xi_1,\xi_2),
\end{align*}
where we have used the $2\pi\mathbb{Z}^2$ periodicity of the
filters (implicitly in $A_{1,2}^{0,\pi}=A_{1,2}^{0,0}$ and
$B_{2,1}^{\pi,0}=B_{2,1}^{0,0}$). Similar calculations show that
$f(\xi_1,\xi_2+\pi)=-f(\xi-\pi,\xi_2)$ and $f(\xi_1,\xi_2)=
f(\xi_1+\pi,\xi_2+\pi)$, that is
$e^{-(\xi_1+\xi_2)}f(\xi_1,\xi_2)$ is a $\pi\mathbb{Z}^2$-periodic
function, and so it suffices to check that equation
\eqref{gcondition} holds on $L_{0,0}$ for it to hold on all of the
$L_{i,j}$.

 For $(\xi_1,\xi_2)\in L_{0,0}$, we have, by the
definition of $A_{1,1}^{0,0}$;
\begin{align}
A_{1,1}^{0,0}=\frac{\hat{\phi}(2\xi_1,\xi_2)}{\hat{\phi}
(\xi_1,\xi_2)}&=2\pi\hat{\phi}(2\xi_1,\xi_2)\\
&=\frac{\hat{\phi}(2\xi_1,2\xi_2)}{\sqrt{\hat{\phi}(2\xi_1,2\xi_2)^2
+\hat{\phi}(2\xi_1+2\pi,2\xi_2)^2}}.
\end{align}
At this point we introduce the notation
\begin{equation}
\Phi^{a,b}=\hat{\phi}(2\xi_1+a,2\xi_2+b)^2,
\end{equation}
for $a,b = 0, 2\pi$ so that
\begin{equation}
A_{1,1}^{0,0}=\sqrt{\frac{\Phi^{0,0}}{\Phi^{0,0}+\Phi^{2\pi,0}}}.
\end{equation}
The other scaled filters also simplify. For example
\begin{align*}
A_{1,2}^{\pi,0}=m_A(\xi_1+\pi,2\xi_2)=
\frac{\hat{\phi}(2\xi_1+2\pi,2\xi_2)}
{\hat{\phi}(\xi_1+\pi,2\xi_2)}&=\sqrt{\Phi^{2\pi,0}}
\sqrt{\frac{\Phi^{2\pi,0}+\Phi^{2\pi,2\pi}}{\Phi^{2\pi,0}}}\\
&=\sqrt{\Phi^{0,0}+\Phi^{2\pi,2\pi}}.
\end{align*}
In fact on  $L_{0,0}$ all the scaled filters in the formula for
$f(\xi_1,\xi_2)$ are expressible in terms of $\hat{\phi}$
evaluated at the four points $(2\xi_1,2\xi_2)+(i2\pi,j2\pi),
i,j\in \{0,1\}$. With the other filters similarly expressed this
leads to the equality
\begin{align*}
f(\xi_1,\xi_2)&=\Phi^{0,0}\sqrt{\frac{\left(\Phi^{2\pi,0}+\Phi^{2\pi,2\pi}\right)\left(\Phi^{0,2\pi}+\Phi^{2\pi,2\pi}\right)}{\left(\Phi^{0,0}+\Phi^{2\pi,0}\right)\left(\Phi^{0,0}+\Phi^{0,2\pi}\right)}}\\
&-\Phi^{2\pi,0}\sqrt{\frac{\left(\Phi^{0,0}+\Phi^{0,2\pi}\right)\left(\Phi^{0,2\pi}+\Phi^{2\pi,2\pi}\right)}{\left(\Phi^{0,0}+\Phi^{2\pi,0}\right) \left(\Phi^{2\pi,0}+\Phi^{2\pi,2\pi}\right)}}\\
&-\Phi^{0,2\pi}\sqrt{\frac{\left(\Phi^{2\pi,0}+\Phi^{2\pi,2\pi}\right)\left(\Phi^{0,0}+\Phi^{2\pi,0}\right)}{\left(\Phi^{0,2\pi}+\Phi^{2\pi,2\pi}\right)\left(\Phi^{0,0}+\Phi^{0,2\pi}\right)}}\\
&+\Phi^{2\pi,2\pi}\sqrt{\frac{\left(\Phi^{0,0}+\Phi^{0,2\pi}\right)\left(\Phi^{0,0}+\Phi^{2\pi,0}\right)}{\left(\Phi^{0,2\pi}+\Phi^{2\pi,2\pi}\right)\left(\Phi^{2\pi,0}+\Phi^{2\pi,2\pi}\right)}}.
\end{align*}
We simplify the expression by taking out the factor
\begin{equation*}
g(\xi_1,\xi_2):=\left(\sqrt{\left(\Phi^{0,0}+\Phi^{0,2\pi}\right)\left(\Phi^{0,0}+\Phi^{2\pi,0}\right)\left(\Phi^{0,2\pi}+\Phi^{2\pi,2\pi}\right)\left(\Phi^{2\pi,0}+\Phi^{2\pi,2\pi}\right)}\right)^{-1}.
\end{equation*}
By the definition of $\phi$, the function $g$ is nonvanishing
almost every on $L_{0,0}$ and so condition \eqref{gcondition}
holds if and only if
\begin{align*}
0=&\Phi^{0,0}\left(\Phi^{2\pi,0}+\Phi^{2\pi,2\pi}\right)\left(\Phi^{0,2\pi}+\Phi^{2\pi,2\pi}\right)-\Phi^{2\pi,0}\left(\Phi^{0,0}+\Phi^{0,2\pi}\right)\left(\Phi^{0,2\pi}+\Phi^{2\pi,2\pi}\right)\\
&-\Phi^{0,2\pi}\left(\Phi^{2\pi,0}+\Phi^{2\pi,2\pi}\right)\left(\Phi^{0,0}+\Phi^{2\pi,0}\right)+\Phi^{2\pi,2\pi}\left(\Phi^{0,0}+\Phi^{0,2\pi}\right)\left(\Phi^{0,0}+\Phi^{2\pi,0}\right),
\end{align*}
which, after some routine algebra, simplifies to
\begin{equation*}
\left(\Phi^{0,0}\Phi^{2\pi,2\pi}-\Phi^{2\pi,0}\Phi^{0,2\pi}\right)\left(\Phi^{0,0}+\Phi^{2\pi,0}+\Phi^{0,2\pi}+\Phi^{2\pi,2\pi}\right)=0.
\end{equation*}
We observe that
$$\left(\Phi^{0,0}+\Phi^{2\pi,0}+\Phi^{0,2\pi}+
\Phi^{2\pi,2\pi}\right)=\tfrac{1}{4\pi^2}$$ since on rewriting in
terms of $\phi$  it coincides condition (d). Thus
\eqref{gcondition} holds almost everywhere if and only if, for
almost every $(\xi_1,\xi_2)\in K_{0,0}$,
\begin{equation*}
(\hat{\phi}(\xi_1,\xi_2)\hat{\phi}(\xi_1-2\pi,\xi_2-2\pi))^2-
(\hat{\phi}(\xi_1-2\pi,\xi_2)\hat{\phi}(\xi_1,\xi_2-2\pi))^2=0,
\end{equation*}
as required.
\end{proof}

 In summary, we have the following theorem.

\begin{thm}Let $A=\left[\begin{smallmatrix}2 &0\\0& 1\end{smallmatrix}\right],$
$B=\left[ \begin{smallmatrix}1&0\\0&2 \end{smallmatrix}\right]$.
Then, with respect to dilation by $(A,B)$, there exists a
nonseparable real valued wavelet $\psi$, associated with a
nonseparable BMRA $\hrmra$ in $L^2(\bR^2)$, for which $\hat{\psi}$
has compact support.
\end{thm}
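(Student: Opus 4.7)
The plan is to chain Theorem~\ref{t:rank2meyer} with Theorem~\ref{W:waveletthm}. First, I would exhibit an explicit $\hat{\phi}$ satisfying conditions (a)--(g) of Theorem~\ref{t:rank2meyer} whose restriction to each of the four corner squares $(\pm\tfrac{2\pi}{3},\pm\tfrac{4\pi}{3})^2$ is supported on a triangle, along the lines of the example depicted in Figure~3 above. The values on one corner (say the NE corner) are chosen freely subject to (a), (b), (d), and the remaining three are determined by the symmetry $\hat{\phi}(-\xi)=\hat{\phi}(\xi)$, which is what will ultimately force $\phi$ to be real-valued. The crucial observation is that, for triangular supports arranged so that the NE and SW triangles occupy complementary halves of the corner square (and similarly for NW and SE), at every $(\xi_1,\xi_2)\in(\tfrac{2\pi}{3},\tfrac{4\pi}{3})^2$ at least one of $\hat{\phi}(\xi_1,\xi_2)$ and $\hat{\phi}(\xi_1-2\pi,\xi_2-2\pi)$ vanishes, and similarly at least one of $\hat{\phi}(\xi_1-2\pi,\xi_2)$ and $\hat{\phi}(\xi_1,\xi_2-2\pi)$ vanishes. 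Hence both products in (g) are identically zero and (g) becomes the trivial identity $0=0$. Conditions (e) and (f) then extend $\hat{\phi}$ uniquely to the border rectangles $I_{(i,j,k)}$ and $J_{(i,j)}$, preserving the even symmetry, while (d) is arranged by a routine choice of constant values on the triangles.

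With $\hat{\phi}$ so specified, Theorem~\ref{t:rank2meyer} produces a BMRA $\hrmra$ with respect to the dilation pair $(A,B)$. This BMRA is nonseparable: a separable factorisation $\hat{\phi}(\xi_1,\xi_2)=\hat{\phi}_1(\xi_1)\hat{\phi}_2(\xi_2)$ would make $\supp\hat{\phi}$ a union of axis-aligned rectangles on each corner square, which is incompatible with triangular support. Reality of $\phi$ follows from the evenness of $\hat{\phi}$ and the fact that $\hat{\phi}$ is nonnegative (hence equals its own complex conjugate).

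Finally, I invoke Theorem~\ref{W:waveletthm}. Since $(\det A-1)(\det B-1)=1$ the wavelet set there is a singleton $\{\psi\}$, and the filter $m_{\psi}^{AB}$ is obtained as the unit row completing the explicit $3\times 4$ partial isometry whose first three rows are the translate filter matrix of $\{\phi,\psi^A,\psi^B\}$ constructed in Section~5. The identity $\hat{\psi}(\xi)=m_{\psi}^{AB}((AB)^{-1}\xi)\,\hat{\phi}((AB)^{-1}\xi)$ shows that $\supp\hat{\psi}\subseteq AB\cdot\supp\hat{\phi}$, which is compact. To secure reality of $\psi$ I would choose the one-dimensional completion so that $m_{\psi}^{AB}(-\xi)=\overline{m_{\psi}^{AB}(\xi)}$; together with $\hat{\phi}(-\xi)=\hat{\phi}(\xi)$ this gives $\hat{\psi}(-\xi)=\overline{\hat{\psi}(\xi)}$, i.e.\ $\psi$ real. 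Nonseparability of $\psi$ then follows from the nonseparability of $\phi$, since the explicit formulae for $m_{\psi^A}^{AB}$ and $m_{\psi^B}^{AB}$ in Section~5 show that any factorisation of $\psi$ would descend to one of $\phi$.

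The main obstacle I anticipate is twofold: first, verifying that the triangular-support choice on the four corner squares can be made consistent with the normalisation (d) and the evenness while still producing a genuinely nonseparable $\phi$ (the paper's own remark following Theorem~\ref{t:rank2meyer} indicates this is elementary, but it must be written out); second, confirming that the reality symmetry can be imposed \emph{simultaneously} with unitarity of the $4\times 4$ filter completion. Both ultimately rest on the one-dimensional freedom in the last Gram--Schmidt step, which is a single unimodular function available for symmetry adjustment.
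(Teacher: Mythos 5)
Your proposal is essentially the paper's intended argument: the paper states this theorem only as a summary, its proof being exactly the chaining of Theorem \ref{t:rank2meyer} (with the Figure~3 triangular-support example, for which the authors likewise note that condition (g) holds trivially with both products identically zero and that triangularity forces nonseparability) with the singleton wavelet produced by Theorem \ref{W:waveletthm}, compact support of $\hat\psi$ coming from the filter relation $\hat\psi(\xi)=m_\psi^{AB}((AB)^{-1}\xi)\hat\phi((AB)^{-1}\xi)$. Your treatment of reality via the symmetry $\hat\phi(-\xi)=\hat\phi(\xi)$ and a symmetric choice of the final unimodular completion is, if anything, slightly more explicit than the paper's passing remark that there is ``further flexibility to arrange $\phi$ to be real-valued.''
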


\begin{example}
We finish with some further examples.
 Let
$\{\Xi_{i}\}_{i\in\{0,1\}}$ be a partition of
$\{K_{i,j}\}_{i,j\in\{0,1\}}$ such that

(i) $\{\Xi_{i}\}_{i\in\{0,1\}}$ is invariant under the group $\G$
of translation by elements of $2\pi\mathbb{Z}^2$, when
$\{K_{i,j}\}_{i,j\in\{0,1\}}$ is viewed as a subset of
$\mathbb{R}^2/4\pi\mathbb{Z}^2$,

(ii) if $(\xi_1,\xi_2)\in \Xi_0$, then $(-\xi_1,-\xi_2) \in
\Xi_1$.

Observe that if $\hat{\phi}$ is  chosen to satisfy conditions (d)
and (g) of Theorem 6.2 on $\Xi_0$, then by defining
$\hat{\phi}(\xi_1,\xi_2)=\hat{\phi}(-\xi_1,-\xi_2)$ for
$(\xi_1,\xi_2)\in \Xi_1$  the function $\hat{\phi}$ then satisfies
(d) and (g) on the whole of $K_{0,0}$. Consider the simplest case
with  $\hat{\phi}$  constant on each set $K_{i,j}\cap\Xi_0$ for
${i,j\in\{0,1\}}$.  Thus  set $\hat{\phi}(\xi_1,\xi_2)=\alpha$ for
$(\xi_1,\xi_2) \in (K_{0,0}\cap\Xi_0)$, and
$\hat{\phi}(\xi_1,\xi_2)=\delta$ for $(\xi_1,\xi_2) \in
(K_{1,1}\cap\Xi_0)$ for some constants $\alpha,\delta>0$
satisfying $\alpha^2+\delta^2<\tfrac{1}{8\pi^2}$.  Define
$\hat{\phi}(\xi_1,\xi_2)=\beta$ for $(\xi_1,\xi_2) \in
(K_{1,0}\cap\Xi_0)$, where
\begin{equation*}
\beta=\frac{1}{2}\left(\frac{1}{4\pi^2}-\alpha^2-\delta^2\right)+\frac{1}{2}\sqrt{\left(\frac{1}{4\pi^2}-\alpha-\delta\right)^2-4\alpha\delta},
\end{equation*}
and $\hat{\phi}(\xi_1,\xi_2)=\gamma$ for $(\xi_1,\xi_2) \in
(K_{0,1}\cap\Xi_0)$, where
\begin{equation*}
\gamma=\frac{1}{4\pi^2}-\alpha-\beta-\delta.
\end{equation*}
We have now defined $\hat{\phi}$ on $\Xi_0$ and, as suggested
above,  define $\hat{\phi}(\xi_1,\xi_2)=\hat{\phi}(-\xi_1,-\xi_2)$
for $(\xi_1,\xi_2)\in \Xi_1$. Thus  we have defined $\hat{\phi}$
to satisfy (d), (a) and (g) of Theorem 7.2, and so, as before,
$\hat{\phi}$, and ${\phi}$, are determined and yield a BMRA. Note
that we have $\hat{\phi}(\xi_1,\xi_2)=\hat{\phi}(-\xi_1,-\xi_2)$,
and so the scaling function is real valued.

Notice that $\hat{\phi}$ takes at most $4$ values on each set
$I_{(i,j,k)}$ and these values, together with the values $\alpha,
\beta, \gamma, \delta$ and $\frac{1}{2\pi}$ are the only values
taken by $\hat{\phi}$. In particular, $\hat{\phi}$ is a  finite
linear combination of characteristic functions.
\end{example}



\end{document}